\documentclass[12pt]{amsart}    

\usepackage[margin=1in]{geometry}  
\usepackage[all]{xy}    
\usepackage{amsmath,amsthm,amssymb,mathrsfs,mathtools,bm,eucal,tensor}                  
\usepackage{bbm}
\usepackage[colorlinks]{hyperref}
\hypersetup{linkcolor=black, citecolor=black}

\usepackage{fancyhdr}
\pagestyle{fancy}

\pagestyle{plain}
\setcounter{secnumdepth}{2}

\pagestyle{headings}

\usepackage[utf8]{inputenc}

\usepackage{chngcntr}
\numberwithin{equation}{subsection}
\newcounter{keepeqno}
\newenvironment{num}
 {\setcounter{keepeqno}{\value{equation}}%
  \begin{list}{(\theequation)}{\usecounter{equation}}%
  \setcounter{equation}{\value{keepeqno}}}
 {\end{list}}

%


\newcommand{\BB}{{\mathbb {B}}}
\newcommand{\BC}{{\mathbb {C}}}

\newcommand{\BR}{{\mathbb {R}}}

\newcommand{\BT}{{\mathbb {T}}}

\newcommand{\BZ}{{\mathbb {Z}}}

\newcommand{\CC}{{\mathcal {C}}}
\newcommand{\CE}{{\mathcal {E}}}
\newcommand{\CF}{{\mathcal {F}}}

\newcommand{\CI}{{\mathcal {I}}}

\newcommand{\CO}{{\mathcal {O}}}

\newcommand{\CS}{{\mathcal {S}}}

\newcommand{\CU}{{\mathcal {U}}}

\newcommand{\CX}{{\mathcal {X}}}

\newcommand{\FD}{{\mathfrak {D}}}

\newcommand{\Fa}{{\mathfrak {a}}}

\newcommand{\Fg}{{\mathfrak {g}}}
\newcommand{\Fh}{{\mathfrak {h}}}

\newcommand{\Fk}{{\mathfrak {k}}}
\newcommand{\Fl}{{\mathfrak {l}}}

\newcommand{\Fp}{{\mathfrak {p}}}

\newcommand{\Fs}{{\mathfrak {s}}}
\newcommand{\Ft}{{\mathfrak {t}}}

\newcommand{\RD}{{\mathrm {D}}}
\newcommand{\RE}{{\mathrm {E}}}
\newcommand{\RF}{{\mathrm {F}}}

\newcommand{\RI}{{\mathrm {I}}}
\newcommand{\RJ}{{\mathrm {J}}}

\newcommand{\RS}{{\mathrm {S}}}

\newcommand{\RU}{{\mathrm {U}}}

\newcommand{\RX}{{\mathrm {X}}}

\newcommand{\Ad}{{\mathrm{Ad}}}

\newcommand{\el}{{\mathrm{ell}}}

\newcommand{\Gal}{{\mathrm{Gal}}}

\newcommand{\inv}{{\mathrm{inv}}}

\newcommand{\reg}{{\mathrm{reg}}}

\newcommand{\SL}{{\mathrm{SL}}}

\newcommand{\sgn}{{\mathrm{sgn}}}

\newcommand{\st}{{\mathrm{st}}}

\newcommand{\ud}{\,\mathrm{d}}

\newcommand{\wt}{\widetilde}
\newcommand{\wh}{\widehat}

\newcommand{\bs}{\backslash}

\def\alp{{\alpha}}

\def\Del{{\Delta}}

\def\eps{{\epsilon}}

\def\sym{{\rm sym}}

\def\ome{{\omega}}

\def\Sig{{\Sigma}}
\def\gam{{\gamma}}
\def\Gam{{\Gamma}}

\def\wb{\overline} 

\def\vphi{\varphi}

\def\p{\prime}

\def\der{\mathrm{der}}

\def\ad{\mathrm{ad}}

\def\h{\hat}
\def\sc{\mathrm{sc}}
\def\der{\mathrm{der}}

\def\Gstarreg{G^*\text{-}\mathrm{reg}}

\def\RJ{\mathrm{J}}
\def\RF{\mathrm{F}}

\newtheorem{thm}{Theorem}[subsection]
\newtheorem{defin}[thm]{Definition}

\newtheorem{pro}[thm]{Proposition}
\newtheorem{lem}[thm]{Lemma}
\newtheorem{cor}[thm]{Corollary}

\newtheorem{rmk}[thm]{Remark}

\makeatletter

\newcommand{\Rmnum}[1]{\expandafter\@slowromancap\romannumeral #1@}
\makeatother


\begin{document}

\title[Compatibility between endoscopic transfer and Fourier transform over $\BR$]{
Fourier transform and endoscopic transfer on real Lie algebras}

\author{Cheng Chen}
\address{IMJ-PRG, CNRS\\ Bâtiment Sophie Germain, 8 place Aurélie Nemours, Paris, 75013, France}
\email{cheng.chen@imj-prg.fr}

\author{Zhilin Luo}
\address{Department of Mathematics\\
Purdue University\\
West Lafayette IN, 47907, USA}
\email{luo642@purdue.edu}

\subjclass[2020]{Primary 22E50 22E45; Secondary 20G20}

\date{\today}

\keywords{Fourier transform, orbital integral, endoscopy}

\begin{abstract}
We prove that the endoscopic transfer on real Lie algebras exists and commutes with the Fourier transform, using methods that are purely local.
\end{abstract}

\maketitle

\tableofcontents

\section{Introduction}\label{sec:intro}

In his seminal work (\cite{waldtransfert}), J.-L. Waldspurger establishes a fundamental connection between smooth transfer and the Fourier transform on reductive Lie algebras and their endoscopic datum. Notably, the fundamental lemma for the unit element in the Lie algebra for almost all local places, now a theorem thanks to B.-C. Ngô in positive characteristic (\cite{MR2653248}) and Waldspurger's reduction to characteristic zero when the residual characteristic is sufficiently large (\cite{MR2241929}), implies both the existence of smooth transfer and its compatibility with the Fourier transform for Lie algebras over $p$-adic fields. The proof is of global nature and relies on the Lie algebra trace formula. Conversely, the work of D. Kazhdan and Y. Varshavsky (\cite[Remark~4.1.4]{MR2891532}) demonstrates that the compatibility between the Fourier transform and endoscopic transfer on Lie algebras implies the fundamental lemma for the unit element. D. Kazhdan and A. Polishchuk also delve into some variants of Waldspurger's theorem (\cite{MR1995380}).

The goal of the present paper is to establish the archimedean analogue of the theorem of Waldspurger using purely local methods. Over the complex field, this result is already known from Waldspurger (\cite[\S 9]{waldtransfert}), where the proof is likewise entirely local.

To state the theorems of this paper, we first fix our notation and conventions.

\subsubsection{Notation and conventions}

Let $G$ be a reductive algebraic group over $\BR$ with Lie algebra $\Fg$. 
We write $G$ and $\Fg$ for $G(\BC)$ and $\Fg(\BC)$ whenever there is no ambiguity.
Let $\Gam=\Gal_{\BC/\BR}$ be the absolute Galois group of $\BR$. Let $\CS(\Fg(\BR))$ (resp. $\CC^\infty_c(\Fg(\BR))$) denote the space of Schwartz (resp. smooth and compactly supported) functions on $\Fg(\BR)$, where $\Fg(\BR)$ is equipped with the Euclidean topology. 

Fix a non-degenerate, conjugation-invariant, symmetric bilinear form $\langle\cdot,\cdot\rangle_\Fg$ on $\Fg(\BR)$. Let $\psi(\langle\cdot,\cdot\rangle_\Fg)$ denote the Fourier kernel on $\Fg(\BR)$, where $\psi(x) = \exp(2\pi i x)$ for any $x\in \BR$. Define 
\[
\CF_{\psi_\Fg}(f)(X) = 
\int_{\Fg(\BR)}
f(Y)
\psi(\langle X,Y \rangle_\Fg)
\ud_\Fg Y,\quad f\in \CS(\Fg(\BR)),
\]
where $\ud_\Fg Y$ is the Haar measure on $\Fg(\BR)$ that is self-dual with respect to the Fourier transform. We equip $G(\BR)$ with the Haar measure $\ud g$ induced from $\ud_\Fg X$, so that the Jacobian of the exponential map is one at the identity.

Let $\Fg_\reg$ denote the regular semisimple locus of $\Fg$. Let $\Gam(\Fg_\reg(\BR)) = \Fg_\reg(\BR)/G(\BR)$ be the set of $G(\BR)$-conjugacy classes in $\Fg_\reg(\BR)$, and let $\Sig(\Fg_\reg(\BR)) = \Fg_\reg(\BR)/G(\BC)$ be the set of $G(\BR)$-stable conjugacy classes. Let $\Fs_G:\Gam(\Fg_\reg(\BR))\to \Sig(\Fg_\reg(\BR))$ be the natural projection. For $X\in \Fg_\reg(\BR)$, let $\CO_{X}\in \Gam(\Fg_\reg(\BR))$ (resp. $\CO^\st_{X}\in \Sig(\Fg_\reg(\BR))$) be its $G(\BR)$-conjugacy (resp. stable conjugacy) class. Let $\Fg(\BR)_\el\subset \Fg_\reg(\BR)$ be the set of elliptic regular semi-simple elements, i.e., elements $X\in \Fg_\reg(\BR)$ whose centralizer of $X$ in $G(\BR)$ is compact modulo the center of $G(\BR)$. 

For $X\in \Fg_\reg(\BR)$, let $G_X = T_X$ be its centralizer in $G$. Consider the invariant orbital integral over the orbit $\CO_X$:
\begin{equation}\label{eq:1:intro}
\RJ_G(X,f) = 
\RD^G(X)^{1/2}\int_{T_X(\BR)\bs G(\BR)}
f(g^{-1}Xg)\ud \wb{g},\quad f\in \CS(\Fg(\BR)),
\end{equation}
where $\RD^G(X) = |\det(\ad X|_{\Fg/\Fg_X})|$ is the absolute value of the Weyl discriminant of $X$. The integral \eqref{eq:1:intro} converges absolutely and defines a tempered distribution on $\Fg(\BR)$ (\cite[p.37,~Lemma~2]{MR0473111}). 

Let $\wh{j}^G(X,\cdot)$ be the Fourier transform of $\RJ_G(X,\cdot)$:
$$
\RJ_G(X,\CF_{\psi_\Fg}(f))=
\int_{\Fg(\BR)}
f(Y)
\wh{j}^G(X,Y)\ud_\Fg Y,\quad f\in \CS(\Fg(\BR)).
$$
Following \cite[p.171]{waldtransfert}, set $\wh{i}^G(X,Y) = \RD^G(Y)^{1/2}\wh{j}^G(X,Y)$. The distribution $(X,Y)\in \Fg(\BR)\times \Fg(\BR)\mapsto \wh{j}^G(X,Y)$ is locally integrable and smooth on $\Fg_\reg(\BR)\times \Fg_\reg(\BR)$. Moreover, $(X,Y)\in \Fg(\BR)\times \Fg(\BR)\mapsto \wh{i}^G(X,Y)$ is globally bounded (\cite[p.112,~Proposition~9]{MR0473111}). 

\subsubsection{Endoscopic transfer}

Let $\CE(\BR)$ be the set of tuples 
\[
\RE=(G,G^*,\vphi,H,s,\xi)    
\]
as in \cite[\S 1.1]{waldtransfert}, where $G,G^*$ are reductive algebraic groups over $\BR$ with $G^*$ quasi-split, $\vphi:G\to G^*$ is an inner twist, and $(H,s,\xi)$ is an endoscopic triple for $G^*$ in the sense of \cite[\S 7]{MR0757954}. Following \cite[\S 2.3]{waldtransfert}, one can define the transfer factor 
\[
\Del(X_H,X_G)    
\]
for $(X_H,X_G)\in \Fh_{G^*\text{-}\mathrm{reg}}(\BR)\times \Fg_\reg(\BR)$ where $\Fh_{\Gstarreg}$ denotes the set of $G^*$-regular semisimple elements of $\Fh_\reg$, and $\Del(\cdot,\cdot)$ is the analogue of the Langlands-Shelstad transfer factor on Lie algebras (\cite{transferfactorLS}).

For $f_G\in \CS(\Fg(\BR))$ and $X_H\in \Fh_{\Gstarreg}(\BR)$, set 
$$
\RJ_{G,H}(X_H,f_G) = 
\sum_{\CO_{X_G}\in \Gam(\Fg_\reg(\BR))}
\Del(X_H,X_G)
\RJ_G(X_G,f_G),
$$
where the summation is taken over all $X_G\in \Fg_\reg(\BR)$, modulo $G(\BR)$-conjugacy, such that $\Del(X_H,X_G)\neq 0$.

Similarly, for $f_H\in \CS(\Fh(\BR))$ and $\CO^\st_{X_H}\in \Sig(\Fh_{\Gstarreg}(\BR))$, set 
$$
\RJ_H^\st(X_H,f_H) = 
\sum_{\CO_{X^\p_H}\in \Fs_H^{-1}(\CO^\st_{X_H})}
\RJ_H(X^\p_H,f_H),
$$
where the summation is taken over the $H(\BR)$-conjugacy classes contained in the stable conjugacy class of $X_H$. 

\begin{thm}[Smooth transfer on Lie algebras]\label{thm:intro:ST}
For any $f_G\in \CS(\Fg(\BR))$ $($resp. $\CC^\infty_c(\Fg(\BR))$$)$, there exists $f_H\in \CS(\Fh(\BR))$ $($resp. $\CC^\infty_c(\Fh(\BR))$$)$, such that 
$$
\RJ_{G,H}(X_H,f_G) = \RJ^\st_H(X_H,f_H)
$$
for any $X_H\in \Fh_{\Gstarreg}(\BR)$.
\end{thm}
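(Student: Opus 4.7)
The plan is to apply Bouaziz's archimedean refinement of Harish-Chandra's characterization of the image of the stable orbital integral map. Specifically, I would verify that the function
\[
\varphi_{f_G}:X_H\longmapsto \RJ_{G,H}(X_H,f_G),\qquad X_H\in \Fh_{\Gstarreg}(\BR),
\]
satisfies the conditions that characterize a function of the form $X_H\mapsto \RJ_H^\st(X_H,f_H)$ for some $f_H\in \CS(\Fh(\BR))$. By that characterization, the required conditions are: (i) smoothness and stable-Weyl-invariance on every Cartan subalgebra of $\Fh$; (ii) Schwartz-type decay along each Cartan subalgebra, uniform under invariant differential operators; and (iii) matching jump relations across walls linking Cartan subalgebras of $\Fh$ related by stable conjugacy.

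First I would verify condition (i). Smoothness of $\varphi_{f_G}$ on $\Fh_{\Gstarreg}(\BR)$ follows from the smoothness of $\Del(X_H,X_G)$ and of $X_G\mapsto \RJ_G(X_G,f_G)$ on the regular set, together with local finiteness: in a neighbourhood of a $G^*$-regular $X_H$ only finitely many $G(\BR)$-conjugacy classes in $\Fg_\reg(\BR)$ contribute nonzero terms. Weyl-group equivariance comes from the corresponding transformation property of $\Del$ under the absolute Weyl group on each Cartan of $\Fh$. For condition (ii), I would combine Harish-Chandra's uniform bounds on the normalized orbital integrals $X_G\mapsto \RJ_G(X_G,f_G)$ with the boundedness of $|\Del(X_H,X_G)|$; the Jacobian normalization built into the transfer factor is arranged so that $\RD^G(X_G)^{1/2}$ is properly converted to $\RD^H(X_H)^{1/2}$, which lets the Schwartz decay of $f_G$ descend to that of $\varphi_{f_G}$.

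The main obstacle is condition (iii), the jump relations at singular hyperplanes in $\Fh$. My plan is to argue by induction on $\dim G$. At a point $X_H$ on a wall in $\Fh_\reg(\BR)$, the stable orbital integral jumps in a way controlled, via Harish-Chandra's semisimple descent, by orbital integrals on the Lie algebra of the centralizer of the semisimple part of $X_H$. Using the archimedean analogue of the Langlands--Shelstad descent formula for transfer factors (in the Lie algebra setting), the corresponding jump of $\varphi_{f_G}$ can be expressed as a $\kappa$-weighted sum of orbital integrals on the Lie algebra of a proper centralizer in $G^*$, where the endoscopic datum descends to a proper endoscopic datum of strictly smaller rank. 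Applying the inductive hypothesis to this descended pair produces the required matching of jumps. This descent-and-matching step is the technical heart of the argument.

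Finally, once conditions (i)--(iii) are verified, Bouaziz's theorem produces an $f_H\in \CS(\Fh(\BR))$ realizing $\varphi_{f_G}$ as a stable orbital integral, completing the Schwartz version. For the compactly supported refinement, the same descent reduces the construction to small neighbourhoods of semisimple elements, and one uses the scaling behaviour of transfer factors on such neighbourhoods via the exponential map to arrange compact support of $f_H$ whenever $f_G$ has compact support.
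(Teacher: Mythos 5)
Your proposal is essentially the paper's own argument: the paper also reduces matters to Bouaziz's spectral characterization of the image of stable orbital integrals (the conditions $I_1^{\st}$–$I_4^{\st}$ defining $\CS\CC(\Fh(\BR))$ and $\CS\CS(\Fh(\BR))$ in Section~\ref{subsec:OrbSt}), and then notes that the verification — most crucially the matching of jump relations across walls — is carried out exactly as in Shelstad's work on $\kappa$-orbital integrals, which is the descent-and-induction step you describe. Your conditions (i)–(iii) correspond directly to $I_1$–$I_2$, $I_4$, and $I_3$, so no gap: this is the same route, just spelled out in a bit more detail than the paper chooses to record.
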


In the group setting, by the work of D. Shelstad (\cite{MR0627641}\cite{MR0661206}), smooth transfer is known for $f_G\in \CC(G(\BR))$ and $f_H\in \CC(H(\BR))$, where $\CC(G(\BR))$ denotes the space of Harish-Chandra Schwartz functions on $G(\BR)$. This result is extended to $\CC^\infty_c(G(\BR))$ by A. Bouaziz (\cite{MR1296557}). Using the same strategies as Bouaziz and Shelstad, and relying on Bouaziz's spectral characterization of Lie algebra orbital integrals for both test and Schwartz functions (\cite{MR1248081}), Theorem \ref{thm:intro:ST} follows immediately.

\subsubsection{Fourier transform and endoscopic transfer are compatible}

Building upon the work of \cite[p.91]{MR1344131} and \cite{waldtransfert}, for any $X_H\in \Fh_{\Gstarreg}(\BR)$, 
we introduce the following two tempered distributions on $\Fg(\BR)$, which are locally integrable and determined by their values on $\Fg_\reg(\BR)$.

\begin{defin}\label{defin:intro}
For $(X_H,X_G)\in \Fh_{\Gstarreg}(\BR)\times \Fg_\reg(\BR)$, set 
\begin{align*}
\RD_{G,H}(X_H,X_G) &= \gam_\psi(\Fg)
\sum_{\CO_{X_G^\p}\in \Gam(\Fg_\reg(\BR))}
\Del(X_H,X_G^\p)\wh{i}^G(X_G^\p,X_G),
\\
\wt{\RD}_{G,H}(X_H,X_G) &= 
\gam_\psi(\Fh)
\sum_{
\substack{
\CO_{X_H^\p}\in \Fs^{-1}_H(\CO^\st_{X_H})
\\ 
\CO_{X_H^{\p\p}}\in \Gam(\Fh_{\Gstarreg}(\BR))
}
}
w(X_H^{\p\p})^{-1}
\Del(X_H^{\p\p},X_G)
\wh{i}^H(X_H^\p,X_H^{\p\p}),
\end{align*}
where 
\begin{itemize}
\item $\CO_{X_G^\p}\in \Gam(\Fg_\reg(\BR))$, i.e., $X_G^\p\in \Fg_\reg(\BR)$ modulo $G(\BR)$-conjugacy;

\item $\CO_{X_H^\p}\in \Fs_H^{-1}(\CO_{X_H}^\st)$, i.e., $X_H^\p$ runs over the $H(\BR)$-conjugacy classes in the stable conjugacy class of $X_H$; 

\item $X^{\p\p}_H\in \Fh_{\Gstarreg}(\BR)$ modulo $H(\BR)$-conjugacy;

\item $w(X^{\p\p}_H) = |\Fs_H^{-1}(\CO_{X^{\p\p}_H})|$, the number of $H(\BR)$-conjugacy classes within the stable conjugacy class of $X^{\p\p}_H$;

\item $\gam_\psi(\Fg)$ and $\gam_\psi(\Fh)$ are the Weil constants associated with $(\Fg(\BR),\langle \cdot,\cdot\rangle_\Fg)$ and $(\Fh(\BR),\langle \cdot,\cdot\rangle_\Fh)$, respectively $($\cite[\S 1]{jlgl2}$)$. Here the non-degenerate conjugation-invariant symmetric bilinear form $\langle \cdot,\cdot\rangle_\Fh$ on $\Fh(\BR)$ is induced from $\langle \cdot,\cdot \rangle_\Fg$ as in \cite[VIII.~6]{MR1344131}.
\end{itemize}
\end{defin}

We now state our main result, which can be viewed as the archimedean analogue of \cite[Conjecture~2,~p.91]{MR1344131}.

\begin{thm}\label{thm:intro:FTcompatible}
With the above notation, for $X_H\in \Fh_{\Gstarreg}(\BR)$ and any $X_G\in \Fg_\reg(\BR)$, we have the identity
\begin{equation}\label{eq:intro:2}
\RD_{G,H}(X_H,X_G) = \wt{\RD}_{G,H}(X_H,X_G).
\end{equation}
\end{thm}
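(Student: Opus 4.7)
My plan is first to observe that \eqref{eq:intro:2} is equivalent, via the Weyl integration formula and the defining identity
$\RJ_G(X,\CF_{\psi_\Fg}(f))=\int_{\Fg(\BR)}f(Y)\wh{j}^G(X,Y)\ud_\Fg Y$,
to the functional identity
\begin{equation}\label{eq:plan:fi}
\gam_\psi(\Fg)\,\RJ_{G,H}\bigl(X_H,\CF_{\psi_\Fg}(f_G)\bigr)
=
\gam_\psi(\Fh)\,\RJ^\st_H\bigl(X_H,\CF_{\psi_\Fh}(f_H)\bigr),
\end{equation}
for every $X_H\in \Fh_{\Gstarreg}(\BR)$ and every matched pair $(f_G,f_H)\in \CS(\Fg(\BR))\times \CS(\Fh(\BR))$, whose existence is guaranteed by Theorem~\ref{thm:intro:ST}. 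Indeed, testing the kernel identity \eqref{eq:intro:2} against a Schwartz function on $\Fg(\BR)$, applying Weyl integration, and interchanging the (locally finite) summations over regular semisimple conjugacy classes with the integral, the left side becomes $\gam_\psi(\Fg)\,\RJ_{G,H}(X_H,\CF_{\psi_\Fg}f_G)$; a parallel computation on $\Fh(\BR)$, using $\wh{i}^H(X_H^\p,X_H^{\p\p})=\RD^H(X_H^{\p\p})^{1/2}\wh{j}^H(X_H^\p,X_H^{\p\p})$, yields the right-hand side of \eqref{eq:plan:fi} as soon as $f_H$ is a smooth transfer of $f_G$. Since $\RD_{G,H}(X_H,\cdot)$ and $\wt{\RD}_{G,H}(X_H,\cdot)$ are both locally integrable and smooth on $\Fg_\reg(\BR)$, the equivalence follows.

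\textbf{Descent to the elliptic locus.}
The next step reduces \eqref{eq:plan:fi} to the case $X_H\in \Fh(\BR)_\el$. When $X_H$ is not elliptic in $\Fh$, there is a proper Levi subgroup $M_H\subsetneq H$ in which $X_H$ is elliptic regular, namely the centralizer in $H$ of the split part of $\Cent_H(X_H)^\circ$; the compatibility of the Langlands--Shelstad transfer factor on Lie algebras with parabolic descent matches $M_H$ with a Levi $M_G$ of $G^*$ forming an endoscopic datum $(M_G,M_H,s,\xi_M)$. Harish-Chandra's parabolic descent formula for orbital integrals and their Fourier transforms, combined with the factorization of $\CF_{\psi_\Fg}$ and $\CF_{\psi_\Fh}$ along these Levi decompositions (compatible with the restrictions of $\pair{\cdot,\cdot}_\Fg$ and $\pair{\cdot,\cdot}_\Fh$), then reduces \eqref{eq:plan:fi} for $(G,H,X_H)$ to the analogous identity for $(M_G,M_H,X_H)$, with $X_H$ now elliptic in $\Fm_H$. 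The Weil constants $\gam_\psi$ factor compatibly along these splittings.

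\textbf{The elliptic case and the main obstacle.}
The heart of the proof handles elliptic $X_H$. Here I invoke Harish-Chandra's Plancherel expansion of the tempered distributions $\wh{j}^G(X_G^\p,\cdot)$ and $\wh{j}^H(X_H^\p,\cdot)$: on the elliptic regular set, they admit convergent expansions in tempered characters $\Theta_\pi$ of $G(\BR)$ and $H(\BR)$, with the discrete series attached to the compact Cartan contributing the explicit terms predicted by Harish-Chandra's limit formula. Substituting into \eqref{eq:plan:fi} and using the adjointness between orbital integrals and tempered characters, the identity reorganizes into a family of endoscopic identities for tempered characters: for each tempered L-packet $\Pi_H$ of $H(\BR)$, its stable character equals a $\kappa$-linear combination (with coefficients read off from the transfer factor) of characters in the associated tempered L-packet $\Pi_G$ of $G(\BR)$. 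This is precisely the content of Shelstad's endoscopic character identities (\cite{MR0627641,MR0661206,MR0757954}), and the scalar $\gam_\psi(\Fg)/\gam_\psi(\Fh)$ is tracked through the Lie algebra normalization of the transfer factor as in \cite{waldtransfert}. \emph{The main technical obstacle} is twofold: justifying the interchange of the Plancherel expansion with the regular semisimple summations and controlling convergence near the walls of the Cartan, where the Fourier transforms of orbital integrals have only logarithmic regularity; and tracking the precise Weil constants through the descent step so that the scalar in \eqref{eq:plan:fi} emerges with the correct normalization. The requisite spectral continuity is furnished by Bouaziz's spectral characterization of Lie algebra orbital integrals \cite{MR1248081}, while the final character-level matching rests on Shelstad's theorem.
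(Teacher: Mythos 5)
Your proposal takes a genuinely different route from the paper. The paper never passes through the functional identity of Corollary~\ref{cor:intro:FTcompatible}; it proves the kernel identity~\eqref{eq:intro:2} directly. After the same reduction to the elliptic endoscopic/elliptic-$X_H$ situation (via the Lie-algebra analogue of Waldspurger's \S3--\S6), the paper's two steps are: (i)~an explicit verification of~\eqref{eq:intro:2} when $X_G$ is \emph{also} elliptic, obtained by plugging Rossmann's closed formula for $\wh{i}^G(X,Y)$ on the compact Cartan (Theorem~\ref{thm:rossmann_formula}) into both sides and matching term-by-term against the transfer factor (only $\Del_{\RI\RI}$ survives, and the $\gam_\psi$'s cancel by a Weil-index count); and (ii)~Harish-Chandra's uniqueness theorem for $\RI(\Fg)$-finite invariant tempered distributions of elliptic type (Theorem~\ref{thm:elliptic_unique}), applied after showing that both $\RD_{G,H}(X_H,\cdot)/\RD^G(\cdot)^{1/2}$ and $\wt\RD_{G,H}(X_H,\cdot)/\RD^G(\cdot)^{1/2}$ are such distributions (Proposition~\ref{pro:D&tildD_are_both_elliptic_type}, via Harish-Chandra semisimple descent). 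This keeps the whole argument on the Lie algebra and only needs the Weyl-type formula of Rossmann plus Harish-Chandra's structure theory of eigendistributions.

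Your route instead converts~\eqref{eq:intro:2} into the functional identity~\eqref{eq:plan:fi}, descends to elliptic $X_H$, expands $\wh{j}^G(X_G',\cdot)$ and $\wh{j}^H(X_H',\cdot)$ in tempered characters, and then appeals to Shelstad's endoscopic character identities. Conceptually this is a valid alternative, since the Fourier transform of an orbital integral is, by definition, a character-type object, so compatibility with transfer ought to be dual to a character identity. But there are several concrete gaps you should be aware of. First, the equivalence between~\eqref{eq:intro:2} and~\eqref{eq:plan:fi} is not automatic: the easy direction is Theorem $\Rightarrow$ Corollary (test the kernel identity), which is what the paper uses. Going from~\eqref{eq:plan:fi} to~\eqref{eq:intro:2} requires expressing $\gam_\psi(\Fh)\RJ^\st_H(X_H,\CF_{\psi_\Fh}(f_H))$ as $\int_{\Fg(\BR)} f_G \cdot \wt\RD_{G,H}(X_H,\cdot)/\RD^G(\cdot)^{1/2}$ using the transfer condition on the pair $(f_G,f_H)$; this rests on the stability of $\sum_{X_H'}\wh{j}^H(X_H',\cdot)$ in the second variable and on the weight $w(X_H'')^{-1}$, exactly the manipulation behind Waldspurger's \cite[Remarques~(10)~p.92]{MR1344131} --- you cannot simply assert smoothness and local integrability. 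Second, your elliptic-case step descends Shelstad's group-level identities to the Lie algebra, which involves the exponential map, the Jacobian factor, the comparison of the Lie-algebra transfer factor in~\eqref{eq:defin:Del-4} with the Langlands--Shelstad group transfer factor, and the matching of the Weil constants $\gam_\psi(\Fg)/\gam_\psi(\Fh)$ with the implicit normalizations in Shelstad's work; you flag these as obstacles but do not resolve them, and the paper's Rossmann-formula computation sidesteps all of them with an explicit Weyl-group cancellation. Third, your argument never explicitly addresses non-elliptic $X_G$: the paper handles this by the uniqueness theorem for eigendistributions of elliptic type, and although your Plancherel/character route implicitly yields the same extension (because Shelstad's character identities hold on every Cartan subgroup), this step must be stated. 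In sum, your approach exchanges the paper's elementary, self-contained Lie-algebra argument for a reduction to the substantially deeper endoscopic character identities and, as written, leaves the key technical verifications (functional-identity equivalence, group-to-Lie-algebra transfer factor match, constant tracking) as acknowledged gaps.
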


Following the same argument as \cite[Remarques.~(10)~p.92]{MR1344131}, Theorem \ref{thm:intro:FTcompatible} immediately implies the compatibility of the Fourier transform with smooth transfer.

\begin{cor}\label{cor:intro:FTcompatible}
With the above notation, let $f_G\in \CS(\Fg(\BR))$ and $f_H\in \CS(\Fh(\BR))$ satisfy the smooth transfer condition
$$
\RJ_{G,H}(X_H,f_G) = \RJ^\st_H(X_H,f_H),\quad X_H\in \Fh_{\Gstarreg}(\BR).
$$
Then
$$
\gam_{\psi}(\Fg)
\RJ_{G,H}(X_H,\CF_{\psi_\Fg}(f_G)) = 
\gam_\psi(\Fh)
\RJ^\st_H(X_H,\CF_{\psi_\Fh}(f_H)).
$$
\end{cor}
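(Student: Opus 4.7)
The plan is to deduce the corollary from Theorem \ref{thm:intro:FTcompatible} and the smooth transfer hypothesis by a formal unfolding that transfers the Fourier integral between $\Fg(\BR)$ and $\Fh(\BR)$, following the $p$-adic computation of \cite[Remarques~(10), p.~92]{MR1344131}. First, I would rewrite the left-hand side as an integral against the distribution $\RD_{G,H}(X_H,\cdot)$. Unfolding the definitions of $\RJ_{G,H}$ and $\RJ_G$ and using the normalization $\wh{j}^G = (\RD^G)^{-1/2}\wh{i}^G$, swapping the finite sum over $\CO_{X_G}$ with the Fourier integral yields
\[
\gam_\psi(\Fg)\,\RJ_{G,H}(X_H,\CF_{\psi,\Fg}(f_G)) = \int_{\Fg(\BR)} f_G(Y)\,\RD^G(Y)^{-1/2}\,\RD_{G,H}(X_H,Y)\,\ud_\Fg Y.
\]
The interchange is legitimate because $\wh{i}^G$ is globally bounded on $\Fg(\BR)\times\Fg(\BR)$ by \cite[p.~112, Prop.~9]{MR0473111}, $\RD^G(\cdot)^{-1/2}$ is locally integrable on $\Fg(\BR)$, and $f_G$ is Schwartz.

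Next, I would apply Theorem \ref{thm:intro:FTcompatible} to replace $\RD_{G,H}$ by $\wt{\RD}_{G,H}$, substitute the defining formula of $\wt{\RD}_{G,H}$, and interchange the Fourier integral with the sum over $\CO_{X_H^{\p\p}}$ (which is finite for each $Y \in \Fg_\reg(\BR)$ because $\Del(X_H^{\p\p},Y)$ vanishes unless $X_H^{\p\p}$ matches $Y$ stably). Applying the Weyl integration formula on $\Fg(\BR)$ together with the identification between the natural measures on matching Cartan subalgebras of $\Fg$ and $\Fh$, the inner integral
\[
\int_{\Fg(\BR)} f_G(Y)\,\RD^G(Y)^{-1/2}\,\Del(X_H^{\p\p},Y)\,\ud_\Fg Y
\]
is recognized as $\RJ_{G,H}(X_H^{\p\p},f_G)$, which by smooth transfer equals $\RJ^\st_H(X_H^{\p\p},f_H)$.

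Finally, I would run the analogous calculation in reverse on the $H$-side: using Weyl integration on $\Fh(\BR)$ and the definition of $\CF_{\psi,\Fh}$, the resulting expression
\[
\gam_\psi(\Fh) \sum_{\CO_{X_H^\p},\,\CO_{X_H^{\p\p}}} w(X_H^{\p\p})^{-1}\,\wh{i}^H(X_H^\p,X_H^{\p\p})\,\RJ^\st_H(X_H^{\p\p},f_H)
\]
is identified with $\gam_\psi(\Fh)\,\RJ^\st_H(X_H,\CF_{\psi,\Fh}(f_H))$. The main obstacle is the bookkeeping in the middle step: carefully tracking the $\RD^G(\cdot)^{\pm 1/2}$, $\RD^H(\cdot)^{\pm 1/2}$ factors, the stabilizer weights $w(X_H^{\p\p})^{-1}$, the orders of the relevant Weyl groups, and the Jacobian identifications between matching tori so that every factor cancels as in the $p$-adic case. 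This bookkeeping carries over verbatim from \cite{MR1344131}, since the necessary analytic inputs (local integrability of $\RD^{\pm 1/2}$, global boundedness of $\wh{i}^G, \wh{i}^H$, and Schwartz decay of the test functions) are available in the archimedean setting.
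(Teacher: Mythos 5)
The crucial middle step of your proposal does not go through. After substituting the formula for $\wt{\RD}_{G,H}(X_H,Y)$ into the integral over $\Fg(\BR)$ and attempting to swap the Fourier integral with the sum over $\CO_{X_H^{\p\p}}\in\Gam(\Fh_{\Gstarreg}(\BR))$, you arrive at the ``inner integral''
\[
\int_{\Fg(\BR)} f_G(Y)\,\RD^G(Y)^{-1/2}\,\Del(X_H^{\p\p},Y)\,\ud_\Fg Y
\]
for a \emph{fixed} $X_H^{\p\p}$, and claim it equals $\RJ_{G,H}(X_H^{\p\p},f_G)$. Neither claim is right. First, for fixed $X_H^{\p\p}\in\Fh_{\Gstarreg}(\BR)$, the transfer factor $\Del(X_H^{\p\p},\cdot)$ is supported on the finite union of regular $G(\BR)$-orbits that match $X_H^{\p\p}$; these orbits have positive codimension in $\Fg(\BR)$, so the support has Lebesgue measure zero and the displayed integral is identically zero. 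Second, $\RJ_{G,H}(X_H^{\p\p},f_G)$ is a \emph{finite sum} of orbital integrals $\sum_{\CO_{X_G}}\Del(X_H^{\p\p},X_G)\RJ_G(X_G,f_G)$, not an integral over $\Fg(\BR)$, so even as a formal bookkeeping step the identification is incorrect.

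The interchange itself is the root of the problem: the set of $\CO_{X_H^{\p\p}}$ contributing a nonzero term is an (uncountably indexed) family fibred over $Y$ — it is a finite set for each fixed $Y$, but that finite set moves with $Y$. This is not a situation where Fubini/Tonelli applies to a fixed countable index set, and exchanging naively produces $0=$ something nonzero. The correct order of operations, following Waldspurger's Remarques (10), is: first apply the Weyl integration formula on $\Fg(\BR)$, reducing $\int_{\Fg(\BR)} f_G(Y)\,\RD^G(Y)^{-1/2}\,\wt{\RD}_{G,H}(X_H,Y)\,\ud_\Fg Y$ to a sum over Cartan subalgebras $\Ft\subset\Fg$ of integrals $\int_{\Ft(\BR)}\wt{\RD}_{G,H}(X_H,Y)\,\RJ_G(Y,f_G)\,\ud Y$; then, for each matching pair $(\Ft_H,\Ft)$, change variables $Y\leftrightarrow X_H^{\p\p}$ via the matching isomorphism $\CI\colon\Ft_H\xrightarrow{\ \sim\ }\Ft$, using the fact that the chosen bilinear forms and self-dual Haar measures identify the measures on $\Ft_H(\BR)$ and $\Ft(\BR)$; then use the smooth-transfer hypothesis inside the torus integral, and finally reassemble the $H$-side via Weyl integration on $\Fh(\BR)$ applied in reverse. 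The weights $w(X_H^{\p\p})^{-1}$ and the real Weyl group orders enter precisely to account for the multiple matching isomorphisms and the passage from orbital to stable orbital integrals. This is a change of variables, not a Fubini exchange, and that distinction is exactly what your write-up skips over.
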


The above corollary immediately implies the following proposition (\cite[Proposition]{Wal00}), which is expected by experts.

\begin{pro}\label{pro:intro:FTofStableStillstable}
The Fourier transform of a stable distribution on $\Fg(\BR)$ is stable.
\end{pro}

We intend to use Proposition \ref{pro:intro:FTofStableStillstable} to study stable distributions supported on the nilpotent cone of $\Fg(\BR)$, following Waldspurger’s work in the $p$-adic setting.

Over a $p$-adic field, Waldspurger shows that \eqref{eq:intro:2} for all pairs $(G,H)$ implies the existence of smooth transfer for test functions on Lie algebras (\cite[Remarques~(8), p.~91]{MR1344131}). It would be interesting to know whether an analogous statement holds in the archimedean case.

On the other hand, following the global approach of Waldspurger, P.-H. Chaudouard establishes a weighted version of Waldspurger’s theorem over a $p$-adic field (\cite{MR2164623}), assuming the weighted fundamental lemma for Lie algebras that is known for split groups thanks to the work of P.-H. Chaudouard and G. Laumon (\cite{CL10, CL12}). It would be interesting to investigate whether a weighted version of Corollary~\ref{cor:intro:FTcompatible} can also be established in the archimedean case, again using purely local methods.

We now briefly outline the proof of Theorem~\ref{thm:intro:FTcompatible}:

\begin{itemize}
    \item
First, following the approach of~\cite[\S 6]{waldtransfert}, 
    one reduces the proof of~\eqref{eq:intro:2} to the case in which the endoscopic triple $(H,s,\xi)$ is elliptic and 
    $X_H \in \Fh(\BR)_\el$;

    \item
Next, using a result of W.~Rossmann~\cite[Corollary~(15), p.~217]{MR0508985}, we verify the identity explicitly when $X_G$ is also elliptic in $\Fg_\reg(\BR)$;

    \item
Finally, we show that, as invariant tempered distributions on $\Fg(\BR)$, 
    both $\RD_{G,H}(X_H,\cdot)$ and $\wt{\RD}_{G,H}(X_H,\cdot)$ 
    are $\RI(\Fg)$-finite and of elliptic type whenever 
    $X_H \in \Fh_{G^*\text{-}\mathrm{reg}}(\BR) \cap \Fh(\BR)_\el$, 
    where $\RI(\Fg)$ is the symmetric algebra of $\Fg(\BC)$ identified as the algebra of constant coefficient invariant differential operators on $\Fg(\BC)$. 
    By~\cite[Theorem~13, p.~116]{MR0473111}, 
    such distributions are uniquely determined by their values on the elliptic locus, 
    which completes the proof.
\end{itemize}
We emphasize that our proof is purely local, 
whereas Waldspurger’s $p$-adic proof, which is significantly deeper, is global in nature, 
relying on the fundamental lemma for the unit element at almost all local places.

\subsubsection{Organization}

The paper is organized as follows. 

In Section~\ref{sec:endoscopicSchwartz}, 
we review the existence of endoscopic transfer for test functions and Schwartz functions on Lie algebras over $\BR$.  
Subsection~\ref{subsec:endoscopic_datum} recalls the notion of endoscopic datum.  
Subsection~\ref{subsec:OrbSt} reviews the spectral characterization of orbital integrals and stable orbital integrals 
for both test and Schwartz functions on $\Fg(\BR)$.  
Subsection~\ref{subsec:smooth-transfer} states the existence of  endoscopic transfer in this context.

In Section~\ref{sec:waldspurgerid}, 
we establish Theorem~\ref{thm:intro:FTcompatible}, the real analogue of Waldspurger’s identity.  
Subsection~\ref{subsec:elliptic_situation} treats the elliptic case, 
where the endoscopic datum is elliptic, $X_H$ is $G^*$-elliptic, and $X_G \in \Fg(\BR)_\el$.  
Subsection~\ref{subsec:harish_chandra_s_uniqueness_theorem} recalls Harish-Chandra uniqueness theorem, 
which asserts that any $\RI(\Fg)$-finite invariant tempered distribution of elliptic type on $\Fg(\BR)$ 
is uniquely determined by its values on the elliptic locus.  
We then show that both $\RD_{G,H}(X_H,\cdot)$ and $\wt{\RD}_{G,H}(X_H,\cdot)$ satisfy these conditions, 
thereby completing the proof of Theorem~\ref{thm:intro:FTcompatible}.

\subsubsection{Acknowledgements}

The authors thank Professor P.-H. Chaudouard for inspiring discussions and Professor Y. Sakellaridis for his interest and comments. We also thank the anonymous referee for helpful comments.
The first author was supported by the European Union’s Horizon 2020 research and innovation programme under the Marie Skłodowska-Curie grant agreement No. 101034255. The second author was supported in part by research funds from the Dickson Instructorship at the University of Chicago and by start-up funds from Purdue University.

\section{Endoscopic transfer}\label{sec:endoscopicSchwartz}

In this section, we review the existence of endoscopic transfer 
for both test functions and Schwartz functions on Lie algebras over $\BR$. 
These results follow directly by combining the work of 
Shelstad (\cite{shelsteadinner}) and 
Bouaziz (\cite{MR1248081,MR1296557}).

\subsection{Endoscopic datum}\label{subsec:endoscopic_datum}

In this subsection, we briefly recall the definitions of endoscopic datum 
and transfer factors on Lie algebras, 
following \cite[\S 2]{waldtransfert}, \cite{kottwitztransfer}, and \cite{transferfactorLS}.

\subsubsection{Endoscopic tuples}

Let $\CE(\BR)$ be the set of tuples 
$
\RE = (G,G^*,\vphi,H,s,\xi)    
$ where:
\begin{enumerate}
    \item $\vphi \colon G \to G^*$ is an inner twist between two reductive algebraic groups over $\BR$, 
    with $G^*$ the quasi-split pure inner form of $G$;
    \item $(H, s, \xi)$ is an \emph{endoscopic triple} (\cite[\S 7]{MR0757954}), meaning that
    \begin{itemize}
        \item $H$ is a quasi-split reductive algebraic group over $\BR$;
        \item $\xi \colon \hat{H} \to \hat{G}$ is an embedding from the dual group of $H$ to that of $G$, and the $\hat{G}$-conjugacy class of $\xi$ is fixed by $\Gamma$;
        \item $s \in Z_{\hat{H}}$, the center of the dual group $\hat{H}$, satisfies:
        \begin{itemize}
            \item $\xi(\hat{H})$ is the connected component of the centralizer of $\xi(s)$ in $\hat{G}$;
            \item the image of $s$ in $Z_{\hat{H}} / \xi^{-1}( Z_{\hat{G}} )$ is fixed by $\Gamma$;
            \item its image $\bar{s}$ in
            $
            \pi_0 \big( [ Z_{\hat{H}} / \xi^{-1}( Z_{\hat{G}} ) ]^\Gamma \big)
            $
            lies in the kernel of the natural map
            \[
            \pi_0 \big( [ Z_{\hat{H}} / \xi^{-1}( Z_{\hat{G}} ) ]^\Gamma \big)
            \longrightarrow H^1(\BR, Z_{\hat{G}}).
            \]
        \end{itemize}
    \end{itemize}
\end{enumerate}

Fix $\Gamma$-stable Borel pairs
$
(\BB,\BT) \text{ of } G^*,
(\BB_H,\BT_H) \text{ of } H,
(\hat{\BB},\hat{\BT}) \text{ of } \hat{G},\text{ and }
(\hat{\BB}_H,\hat{\BT}_H) \text{ of } \hat{H}.
$
Since Borel pairs are unique up to conjugation, 
we may assume that 
$
\xi(\hat{\BT}_H) = \hat{\BT}
$ and 
$
\xi(\hat{\BB}_H) \subset \hat{\BB}.
$ The restriction $\xi|_{\hat{\BT}_H}$ induces a dual isomorphism 
$
\eta \colon \BT_H \to \BT
$
 over $\BC$.

\begin{rmk}\label{rmk:admissible_embed_Weylequivariant}
Since $\hat{H} \subset \hat{G}$, 
we have $W^{\hat{H}} \subset W^{\hat{G}}$, 
where $W^{\hat{H}}$ and $W^{\hat{G}}$ are the absolute Weyl groups of $\h{H}$ and $\h{G}$, respectively. 
Hence, the isomorphism 
\(\xi|_{\hat{\BT}_H} \colon \hat{\BT}_H \to \hat{\BT}\) 
is equivariant with respect to the actions of
\(W^{\hat{H}} \hookrightarrow W^{\hat{G}}\).  
Passing to the dual side, and using the fact that 
\(W^{H} \simeq W^{\hat{H}} \hookrightarrow W^{\hat{G}} \simeq W^G\), 
we see that 
\(\eta \colon \BT_H \to \BT\) 
is also equivariant with respect to the actions of 
\(W^H \hookrightarrow W^G\).
\end{rmk}

\subsubsection{$G^*$-regular elements}

\begin{defin}\label{defin:Gstarreg}
An element $X_H \in \Fh_\reg$ is called \emph{$G^*$-regular} 
if there exists $h \in H$ such that
\[
\Ad_h(X_H) \in \Ft_H
\quad \text{and} \quad
\eta\big(\Ad_h(X_H)\big) \in \Fg^*_\reg,
\]
where $\Ft_H$ is the Lie algebra of $\BT_H$, 
and $\Ad$ denotes the adjoint action.  
Let $\Fh_{\Gstarreg}$ be the set of $G^*$-regular elements in $\Fh$. 

\noindent Fix $(X_H, X_G) \in \Fh_{\Gstarreg}(\BR) \times \Fg_\reg(\BR)$.  
A \emph{diagram} 
\[
\FD(X_H, X_G; \BR)
\]
consists of data
\begin{equation}\label{eq:diagram}
\begin{cases}
h \in H, \quad g^* \in G^*_\sc, \quad g \in G_\sc, \\[4pt]
\xymatrix{
T_{X_H} \ar[r]^{\Ad_h} 
\ar@/^3pc/[rrrrrr]_{\CI^{X_G}_{X_H}}
\ar@/_1.5pc/[rrr]_{\CI^{X_{G^*}}_{X_H}}
& \BT_H \ar[r]^{\eta} & \BT \ar[r]^{\Ad_{g^*}} &
T_{X_{G^*}} 
\ar[rrr]_{\Ad_g \circ \vphi^{-1} = \CI^{X_G}_{X_{G^*}}} &&& T_{X_G}
}
\end{cases}
\end{equation}
satisfying:
\begin{enumerate}
    \item The morphisms in \eqref{eq:diagram} are isomorphisms over $\BC$, and the diagram is commutative;
    \item Both 
    \(\CI^{X_{G^*}}_{X_H} = \Ad_{g^*} \circ \eta \circ \Ad_h\) 
    and 
    \(\CI^{X_G}_{X_{G^*}} = \Ad_g \circ \vphi^{-1}\) 
    are defined over $\BR$;
    \item Let 
    \(\CI^{X_G}_{X_H} = \CI^{X_G}_{X_{G^*}} \circ \CI^{X_{G^*}}_{X_H}\).  
    Then, on Lie algebras, \(\CI^{X_G}_{X_H}(X_H) = X_G\).
\end{enumerate}
Here $G_\sc$ $($resp. $G^*_\sc$$)$ is the simply connected cover of 
$G_\der$ $($resp. $G^*_\der$$)$.
\end{defin}

Following this notation, we write
\[
X_{G^*} = \Ad_{g^*}\Big( \eta\big( \Ad_h(X_H) \big) \Big).
\]
In the sequel, we do not distinguish between the isomorphisms 
$\CI^*_* $ and their Lie algebra versions.

The isomorphism $\Ad_{g^*} \colon \BT \to T_{X_{G^*}}$ 
induces an isomorphism 
\[
\hat{i}_{g^*} \colon \hat{\BT} \to \hat{T}_{X_{G^*}}.
\]
The image of $s \in Z_{\hat{H}}$ under the composition 
$
\hat{i}_{g^*} \circ \xi \colon Z_{\h{H}} \to \hat{T}_{X_{G^*}}
$ and the natural projection 
\(\hat{T}_{X_{G^*}} \to \hat{T}_{X_{G^*}} / Z_{\hat{G}}\) 
is $\Gamma$-invariant and lies in the kernel of the map induced by the long exact sequence:
\[
K(T_{X_{G^*}}/\BR) := 
\ker\Big(
\pi_0
\big(
[\hat{T}_{X_{G^*}} / Z_{\hat{G}}]^\Gamma
\big)
\longrightarrow H^1(\BR, Z_{\hat{G}})
\Big)
.
\]
We denote this image by $\kappa_{X_H}$.

Finally, let 
\(\FD(X_H,X_G;\BR)\) and 
\(\FD(\bar{X}_H,\bar{X}_G;\BR)\) 
be two diagrams as in Definition~\ref{defin:Gstarreg}.  
We now recall the definition of the transfer factor 
\(\Del(X_H,X_G)\) 
from \cite[\S 2.3]{waldtransfert}.

\subsubsection{$\Del_\RI$}

Fix a $\Gamma$-stable splitting of $G^*$
\[
(\BB, \BT, \{ X_\alpha \}_{\alpha \in \Delta_{G^*}}).
\]
Here $\Delta_{G^*}$ denotes the set of simple roots of $\BT$ relative to $\BB$, 
and each $X_\alpha \neq 0$ lies in the root space of $\alpha$ and satisfies
\[
\sigma(X_\alpha) = X_{\sigma(\alpha)}, \quad \sigma \in \Gamma.
\]

Let $\Sigma(T_{X_{G^*}})$ be the set of roots of $T_{X_{G^*}}$ in $\Fg^*$, 
and let $\Sigma^+(T_{X_{G^*}})$ be the subset of positive roots 
with respect to $\Ad_{g^*}(\BB)$.  
For $\sigma \in \Gamma$, let $n_\sigma$ be the image of 
\((g^{*})^{-1} \sigma(g^*) \in W^{G^*_\sc}\),  
where $W^{G^*_\sc}$ is the absolute Weyl group of $G^*_\sc$.  
Let $T_{X_{G^*_\sc}}$ be the preimage of $T_{X_{G^*}}$ in $G^*_\sc$.

For any $\alpha \in \Sigma(T_{X_{G^*}})$, 
choose $a_\alpha \in \BC^\times$ satisfying
\[
\sigma(a_\alpha) = a_{\sigma(\alpha)}, 
\qquad
a_{-\alpha} = -a_\alpha,
\]
where $\sigma \in \Gamma$ is the nontrivial involution.  
Define
\[
a_\sigma = 
\sum_{\substack{
\alpha \in \Sigma^+(T_{X_{G^*}}) \\
\sigma^{-1}(\alpha)<0
}}
\check{\alpha} \otimes a_\alpha
\ \in \
\RX_*(T_{X_{G^*_\sc}}) \otimes_\BZ \BC^\times 
\ \simeq \ T_{X_{G^*_\sc}}(\BC),
\]
where $\RX_*(T_{X_{G^*_\sc}})$ is the cocharacter lattice of $T_{X_{G^*_\sc}}$.  Set
\[
\tau(\sigma) 
= a_\sigma \, g^* \, n_\sigma \, \sigma\big( (g^{*})^{-1} \big).
\]
Then the map 
\(\sigma \mapsto \tau(\sigma)\) 
defines a $1$-cocycle in $T_{X_{G^*_\sc}}$.  
Let $\tau \in H^1(\BR, T_{X_{G^*_\sc}})$ be its cohomology class,  
which is dual to \( K(T_{X_{G^*}}/\BR) \).  

We define
\begin{equation}\label{eq:defin:Del_I}
\Del_\RI(X_H, X_G) = \langle \tau, \kappa_{X_H} \rangle. 
\end{equation}

\begin{rmk}\label{rmk:Del_I_dependence}
The factor $\Del_\RI(X_H, X_G)$ depends only on the isomorphism 
\[
\Ad_{g^*} \colon \BT \xrightarrow{\sim} T_{X_{G^*}},
\]
and not on the particular regular semisimple element 
$X_{G^*}$ contained in $T_{X_{G^*}}$.
\end{rmk}

\subsubsection{$\Del_{\RI\RI}$}

Define
\[
\Sigma^\sym(T_{X_{G^*}})
=
\big\{
\alpha \in \Sigma(T_{X_{G^*}}) 
\ \big|\ 
\exists\, \sigma \in \Gamma,\ \sigma(\alpha) = -\alpha
\big\}.
\]
For $\alpha \in \Sigma^\sym(T_{X_{G^*}})$, set
\[
\Gamma_\alpha = \{ \sigma \in \Gamma \mid \sigma(\alpha) = \alpha \} = \{ 1 \},
\qquad
\Gamma_{\pm \alpha} = \{ \sigma \in \Gamma \mid \sigma(\alpha) = \pm \alpha \} = \Gamma.
\]
Let $\BR_\alpha = \BC$ and $\BR_{\pm\alpha} = \BR$ be the fixed fields of 
$\Gamma_\alpha$ and $\Gamma_{\pm\alpha}$, respectively.  
Let $\chi_\alpha = \sgn$ be the sign (=quadratic) character of 
$\BR^\times_{\pm\alpha} \simeq \BR^\times$.

The isomorphism $T_{X_H} \xrightarrow{\sim} T_{X_{G^*}}$ 
identifies $\Sigma(T_{X_H})$ as a subset of $\Sigma(T_{X_{G^*}})$.  
Define
\[
\Sigma^\sym_{\text{hors }H}(T_{X_{G^*}})
=
\Sigma^\sym(T_{X_{G^*}})
\setminus
\big(
\Sigma(T_{X_H}) \cap \Sigma^\sym(T_{X_{G^*}})
\big),
\]
that is, the set of symmetric roots of $T_{X_{G^*}}$ not coming from $T_{X_H}$.

We now define
\begin{equation}\label{eq:defin:Del_II}
\Del_{\RI\RI}(X_H, X_G)
=
\prod_{\alpha}
\chi_\alpha
\Big(
\frac{\alpha(X_{G^*})}{a_\alpha}
\Big),
\end{equation}
where the product is taken over a set of representatives of 
$\Gamma$-orbits in 
$\Sigma^\sym_{\text{hors }H}(T_{X_{G^*}})$.

\subsubsection{$\Del_{\RI\RI\RI}$}

Define
\[
U 
= \frac{
T_{X_{G^*_\sc}} \times T_{\bar{X}_{G^*_\sc}}
}{
\{ (z, z^{-1}) \mid z \in Z_{G^*_\sc} \}
}.
\]
For $\sigma \in \Gamma$ with 
$\sigma(\vphi) \circ \vphi^{-1} = \Ad_{u_\sigma}$ 
for some $u_\sigma \in G^*_\sc$,  
consider the map
\[
\sigma \longmapsto
\big(
u_\sigma \,\vphi\big(\sigma(g^{*,-1}) g^*\big), \ 
\vphi\big(\bar{g}^{*,-1} \sigma(\bar{g}^*)\big)\, u_\sigma^{-1}
\big)
\in U,
\]
which defines a $1$-cocycle in $U$. Let
\[
\inv(X_H, X_G; \bar{X}_H, \bar{X}_G) 
\in H^1(\BR, U)
\]
denote the cohomology class of this cocycle.

Next, let 
\(\hat{T}_{X_{G^*_\sc}}\) and \(\hat{T}_{\bar{X}_{G^*_\sc}}\) 
be the inverse images of 
\(\hat{T}_{X_{G^*}}\) and \(\hat{T}_{\bar{X}_{G^*}}\) in \(\hat{G}_\sc\).  
Set
\[
\hat{U} 
= 
\frac{
\hat{T}_{X_{G^*_\sc}} \times \hat{T}_{\bar{X}_{G^*_\sc}}
}{
\{ (z, z) \mid z \in Z_{\hat{G}_\sc} \}
}.
\]
Fix $\hat{s} \in \hat{\BT}_\sc$ 
lifting $\xi(s) \in \hat{\BT} / Z_{\hat{G}}$.  
The isomorphisms $\hat{i}_{g^*}$ and $\hat{i}_{\bar{g}^*}$ 
provide
\(\hat{\BT}_\sc \simeq \hat{T}_{\bar{X}_{G^*_\sc}}\).  
Let
\[
s_U \in \pi_0\big( \hat{U}^\Gamma \big)
\]
be the image of 
\(( \hat{i}_{g^*}(\hat{s}),\ \hat{i}_{\bar{g}^*}(\hat{s}) )\).

The groups $H^1(\BR, U)$ and $\pi_0(\hat{U}^\Gamma)$ 
are naturally in Pontryagin duality.  
We define
\begin{equation}\label{eq:defin:Del_III}
\Del_{\RI\RI\RI}(X_H, X_G; \bar{X}_H, \bar{X}_G)
=
\big\langle 
\inv(X_H, X_G; \bar{X}_H, \bar{X}_G), \ 
s_U
\big\rangle.
\end{equation}
\begin{rmk}\label{rmk:Del_III_dependence}
The factor 
\(\Del_{\RI\RI\RI}(X_H, X_G; \bar{X}_H, \bar{X}_G)\) 
depends only on the isomorphisms
\[
\Ad_{g^*} \colon \BT \to T_{X_{G^*}}
\quad \text{and} \quad
\Ad_{\bar{g}^*} \colon \BT \to T_{\bar{X}_{G^*}},
\]
and not on the specific regular semisimple elements 
$X_{G^*}$ or $\bar{X}_{G^*}$ in these maximal tori.
\end{rmk}

\subsubsection{Transfer factor}

The transfer factor is defined by
\begin{equation}\label{eq:defin:Del-4}
\Del(X_H, X_G; \bar{X}_H, \bar{X}_G)
=
\frac{\Del_\RI(X_H, X_G)}{\Del_\RI(\bar{X}_H, \bar{X}_G)}
\frac{\Del_{\RI\RI}(X_H, X_G)}{\Del_{\RI\RI}(\bar{X}_H, \bar{X}_G)}
\Del_{\RI\RI\RI}(X_H, X_G; \bar{X}_H, \bar{X}_G),
\end{equation}
which is independent of all the auxiliary choices above.  

Fix a diagram $\FD(\bar{X}_H, \bar{X}_G; \BR)$  
and assign an arbitrary nonzero value to \(\Del(\bar{X}_H, \bar{X}_G)\).  
Then the normalized transfer factor is defined by
\begin{equation}\label{eq:defin:Del-transfer_factor}
\Del(X_H, X_G)
=
\begin{cases}
\Del(\bar{X}_H, \bar{X}_G)\,
\Del(X_H, X_G; \bar{X}_H, \bar{X}_G), 
& \text{if a diagram } \FD(X_H, X_G; \BR) \text{ exists}, \\[4pt]
0, & \text{otherwise.}
\end{cases}
\end{equation}

\subsection{Orbital integrals and stable orbital integrals}\label{subsec:OrbSt}

In this subsection, we recall the spectral characterization of Lie algebra orbital integrals 
and their stable variants over $\BR$, following 
\cite{MR1248081,MR1296557} and \cite{shelsteadinner}.

Let $G$ be a reductive algebraic group over $\BR$ with Lie algebra $\Fg$.  
When no confusion arises, we write $G = G(\BC)$ and $\Fg = \Fg(\BC)$.  

Fix a Cartan subalgebra $\Ft \subset \Fg$ defined over $\BR$.  
Let $\Sigma(\Ft)$ be the set of roots of $\Ft$ in $\Fg$, 
and let $\Sigma^+(\Ft) \subset \Sigma(\Ft)$ be a choice of positive roots.  
The Cartan involution on $G$ induces a decomposition
\[
\Ft = \Ft^+ \oplus \Ft^-,
\]
where $\Ft^\pm$ are the $\pm 1$-eigenspaces.
\begin{num}
\item
A root $\alpha \in \Sigma(\Ft)$ is called \textbf{imaginary} 
if $\alpha|_{\Ft^+} = 0$.  
Let $\Sigma_\mathrm{im}(\Ft) \subset \Sigma(\Ft)$ denote the set of imaginary roots.
\end{num}
For $\alpha \in \Sigma_\mathrm{im}(\Ft)$, let $H_\alpha$ be its coroot in $\Ft$.  
Then $\overline{H_\alpha} = - H_\alpha$.  
Let $X_\alpha$ be a root vector associated to $\alpha$,  
and choose $c \in \BC^\times$ so that the root vector 
\[
X_{-\alpha} = c \overline{X_\alpha}
\]
associated to $-\alpha$ satisfies 
\[
[X_\alpha, X_{-\alpha}] = H_\alpha.
\]  
Consider the real $3$-dimensional subspace and its embedding
\begin{equation}\label{eq:lifting-of-embedding}
\BR X_\alpha \;+\; \BR H_\alpha \;+\; \BR X_{-\alpha}
\ \hookrightarrow\ 
\Fg(\BR).
\end{equation}
\begin{itemize}
    \item 
    If $c > 0$, then the embedding 
    \eqref{eq:lifting-of-embedding} lifts to a homomorphism 
    \[
    \SL_2(\BR) \longrightarrow G(\BR),
    \]
    and we call $\alpha$ a \textbf{noncompact root}.  

    \item 
    If $c < 0$, then the embedding 
    \eqref{eq:lifting-of-embedding} lifts to a homomorphism 
    \[
    \RS\RU_2 \longrightarrow G(\BR),
    \]
    and we call $\alpha$ a \textbf{compact root}.
\end{itemize}

\subsubsection{Jump datum}

Let $X_0 \in \Fg(\BR)$ be a semisimple element such that 
the derived algebra of its centralizer 
$\Fg(\BR)_{X_0}$ in $\Fg(\BR)$ is isomorphic to $\Fs\Fl_2(\BR)$.  

Let $\Ft$ be a \textbf{fundamental Cartan subalgebra} of $\Fg_{X_0}$ over $\BR$,  
that is, under the Cartan decomposition
\[
\Fg(\BR)_{X_0} = \Fk_{X_0} \oplus \Fp_{X_0},
\]
the real subspace $\Ft(\BR) \cap \Fk_{X_0}$ 
has maximal dimension among all Cartan subalgebras of $\Fg(\BR)_{X_0}$.  

Let $\pm\alpha$ be the unique roots of $\Ft$ in $\Fg$ that vanish on $X_0$.  
Since $\Ft$ is fundamental, these roots satisfy 
\[
\pm \alpha \in \Sigma_\mathrm{im}(\Ft) 
\quad\text{and are noncompact.}
\]
Choose nonzero root vectors $X_{\pm \alpha}$ corresponding to $\pm \alpha$ such that
\[
[H_\alpha, X_{\pm \alpha}] = \pm 2 X_{\pm \alpha}, 
\qquad
[X_\alpha, X_{-\alpha}] = H_\alpha,
\qquad
\overline{X_\alpha} = X_{-\alpha}.
\]
Then the real subspace
\[
\ker \alpha \;\oplus\; i\BR(X_\alpha - X_{-\alpha})
\]
is the real part of a Cartan subalgebra $\Ft'$ of $\Fg_{X_0}$.

Define
\[
s = \exp\big( -\frac{i\pi}{4} \, \ad( X_\alpha + X_{-\alpha}) \big),
\]
which lies in the adjoint group of $\Fg$ and satisfies
\[
s \cdot \Ft = \Ft', 
\qquad 
s \cdot X_0 = X_0.
\]
The mapping
\begin{equation}\label{eq:jumpdatum:mapping}
\gamma \ \longmapsto\  \gamma \circ s^{-1}
\end{equation}
induces a bijection
\[
\Sigma_\mathrm{im}(\Ft) \cap \{\alpha\}^\perp
\ \xrightarrow{\sim}\ 
\Sigma_\mathrm{im}(\Ft'),
\]
where 
\[
\{\alpha\}^\perp = \{\, \beta \in \Sigma(\Ft) \mid \beta(H_\alpha) = 0 \,\}.
\]
Let $\Sigma^+_\mathrm{im}(\Ft) \subset \Sigma_\mathrm{im}(\Ft)$ 
be a chosen set of positive imaginary roots.  
Under the map \eqref{eq:jumpdatum:mapping}, 
\[
\Sigma^+_\mathrm{im}(\Ft) \cap \{\alpha\}^\perp
\quad \mapsto \quad
\Sigma^+_\mathrm{im}(\Ft') \subset \Sigma_\mathrm{im}(\Ft')
\]
defines a set of positive roots for $\Ft'$.

\begin{num}
\item\label{num:adapted_to_alpha}
Following \cite[p.24]{shelsteadinner}, 
we say that $\Sigma^+_\mathrm{im}(\Ft)$ is \textbf{adapted to $\alpha$}  
if it contains all imaginary roots $\beta \in \Sigma_\mathrm{im}(\Ft)$ 
with $\beta(H_\alpha) > 0$.
\end{num}

\begin{defin}\label{defin:jump-datum}
With the notation above, a \emph{jump datum} is the tuple
\[
\big(
X_0, 
\Ft, \Sigma^+_\mathrm{im}(\Ft), 
\Ft', \Sigma^+_\mathrm{im}(\Ft'), 
s
\big).
\]
\end{defin}

\subsubsection{Spectral characterization}

Let $\CU \subset \Fg(\BR)$ be a $G(\BR)$-invariant completely open neighborhood,  
meaning that $\CU$ contains the semisimple part of the Jordan decomposition of every element in $\CU$.  
Let $\CU_\reg$ denote the subset of regular semisimple elements in $\CU$.

Following \cite{MR1248081}, define $\CO\CC(\CU)$ (resp. $\CO\CS(\CU)$)  
as the space of $G(\BR)$-invariant functions  
\[
\RJ \in \CC^\infty(\CU_\reg)
\]
satisfying the following conditions:
\begin{itemize}
\item[\(I_1(\Fg)\):]  
Let $\Ft$ be a Cartan subalgebra of $\Fg$ over $\BR$,  
$K^\Ft_\CU \subset \Ft(\BR) \cap \CU$ a compact neighborhood,  
and $u\in \RI(\Ft)$ a constant-coefficient differential operator on $\Ft$.  
Let $\RJ_\Ft$ be the restriction of $\RJ$ to $\Ft(\BR)\cap \CU_\reg$.  
Then the function $\partial(u)\RJ_\Ft$ is bounded on  
$K^\Ft_\CU \cap \CU_\reg$.

\item[\(I_2(\Fg)\):]  
Let $\Ft$ be a Cartan subalgebra of $\Fg$ over $\BR$,  
and $\Sigma^+_\mathrm{im}(\Ft) \subset \Sigma_\mathrm{im}(\Ft)$  
a system of positive imaginary roots.  
Define
\[
\Fa_{\Sigma^+_\mathrm{im}(\Ft)} \colon 
X \mapsto 
\prod_{\alpha \in \Sigma^+_\mathrm{im}(\Ft)}
\frac{\alpha(X)}{|\alpha(X)|},
\quad
X \in \Ft(\BR) \cap \CU_\reg.
\]
Then $\Fa_{\Sigma^+_\mathrm{im}(\Ft)} \RJ_\Ft$  
extends smoothly to the complement of the imaginary noncompact roots  
in $\Ft(\BR)\cap \CU$.

\item[\(I_3(\Fg)\):]  
Let $(X_0, \Ft, \Sigma^+_\mathrm{im}(\Ft), \Ft', \Sigma^+_\mathrm{im}(\Ft'), s)$  
be a jump datum (Definition~\ref{defin:jump-datum}),  
and $u\in \RI(\Ft)$ a constant-coefficient differential operator on $\Ft$.  
Then
\begin{align*}
\partial(u)\big(\Fa_{\Sigma^+_\mathrm{im}(\Ft)} \RJ_\Ft\big)(X_0^+)
-
\partial(u)\big(\Fa_{\Sigma^+_\mathrm{im}(\Ft)} \RJ_\Ft\big)(X_0^-)
&=
i\, d(X_0) \,
\partial(s \cdot u)\big(\Fa_{\Sigma^+_\mathrm{im}(\Ft')} \RJ_{\Ft'}\big)(X_0),
\end{align*}
where $d(X_0) = 2$ or $1$ depending on whether the reflection for $\alpha$  
can be realized in $G_{X_0}$.  Here
\[
\partial(u)\big(\Fa_{\Sigma^+_\mathrm{im}(\Ft)} \RJ_\Ft\big)(X_0^\pm)
=
\lim_{t \to \pm 0}
\partial(u)\big(\Fa_{\Sigma^+_\mathrm{im}(\Ft)} \RJ_\Ft\big)(X_0 + it H_\alpha),
\]
and for sufficiently small $t \neq 0$,  
we have $X_0 \pm it H_\alpha \in \Ft_\reg(\BR)$,  
so the above limit is well-defined.

\item[\(I_4(\Fg)\):]  
This condition specifies the growth behavior:
\begin{itemize}
    \item[\((\CC)\)]  
    For $\RJ \in \CO\CC(\CU)$ and any Cartan $\Ft$,  
    the restriction $\RJ_\Ft$ is compactly supported on $\Ft(\BR)\cap \CU$.
    
    \item[\((\CS)\)]  
    For $\RJ \in \CO\CS(\CU)$ and any Cartan $\Ft$,  
    the restriction $\RJ_\Ft$ lies in the Schwartz space  
    $\CS(\Ft(\BR)\cap \CU_\reg)$,  
    with seminorms
    \[
    \nu_{P,u}(\RJ_\Ft)
    =
    \sup_{X\in \Ft(\BR)\cap \CU_\reg}
    \Big| P(X) (\partial(u)\RJ_\Ft)(X) \Big| < \infty,
    \]
    where $u\in \RI(\Ft)$ is a constant-coefficient differential operator on $\Ft$,  
    and $P$ is a polynomial function on $\Ft$.
\end{itemize}
\end{itemize}

\begin{thm}[\cite{MR1248081}]\label{thm:orb-spectral}
With the notation above, the map
\[
f \in \CC^\infty_c(\CU) 
\quad 
(\text{resp. } f \in \CS(\Fg(\BR)))
\quad \longmapsto\quad
\RJ_G(\cdot,f)|_{\CU_\reg}
\]
has image contained in $\CO\CC(\CU)$ $($resp. $\CO\CS(\CU)$$)$  
and is surjective.  
In particular, for $X \in \Fg_\reg(\BR)$,  
the distribution $\RJ_G(X,\cdot)$ is tempered.
\end{thm}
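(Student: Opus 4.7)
The plan is to follow Bouaziz's strategy from \cite{MR1248081}: the conditions $I_1$–$I_4$ are essentially a coordinate-wise description of the restrictions of $\RJ_G(\cdot,f)$ to Cartan subalgebras, and the proof splits into (i) verifying these conditions on each Cartan and (ii) producing, from any compatible family of functions on Cartans, a global preimage $f$ on $\Fg(\BR)$. The guiding principle throughout is the Weyl integration formula
\[
\int_{\Fg(\BR)}f(X)\ud_\Fg X
=\sum_{\Ft}\frac{1}{|W(G(\BR),\Ft)|}\int_{\Ft(\BR)_\reg}\RD^G(X)^{1/2}\RJ_G(X,f)\,\ud X,
\]
summed over conjugacy classes of Cartan subalgebras, which converts statements about $f$ into statements about its restrictions to Cartans.

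For the necessity direction, I would fix a Cartan $\Ft$ and unwind the orbital integral $\RJ_G(X,f)$ for $X\in\Ft(\BR)\cap\CU_\reg$ into a compactly supported (resp. rapidly decaying) integral over $T_X(\BR)\backslash G(\BR)$. Condition $I_1(\Fg)$ and the two parts of $I_4(\Fg)$ then follow from differentiating under the integral sign and matching the zeros of $\RD^G(X)^{1/2}$ with the rate at which conjugation of a compact/Schwartz function vanishes near singular elements; this is Harish-Chandra's classical boundedness argument from \cite[Chap.~II]{MR0473111}. Condition $I_2(\Fg)$, the smoothness of $\Fa_{\Sigma^+_\mathrm{im}(\Ft)}\RJ_\Ft$ across compact imaginary walls, comes from applying Harish-Chandra's descent to the compact $\SU_2$ attached to a compact imaginary root: the sign factor $\Fa_{\Sigma^+_\mathrm{im}(\Ft)}$ precisely absorbs the discontinuity produced by $\RD^G(X)^{1/2}=|\alpha(X)|\cdot\{\text{smooth}\}$ along such walls. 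Condition $I_3(\Fg)$, the jump formula across a noncompact imaginary root $\alpha$, is Harish-Chandra's $SL_2$-jump identity: descend to $\Fg_{X_0}$, whose derived algebra is $\Fs\Fl_2(\BR)$, compute the jump by an explicit $SU(1,1)\cong\SL_2(\BR)$ calculation, and recognize the right-hand side as an orbital integral over the Cayley-transformed Cartan $\Ft'=s\cdot\Ft$; the factor $d(X_0)$ keeps track of whether the Weyl reflection for $\alpha$ is realized in $G_{X_0}(\BR)$.

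For surjectivity, which is where the real work lies, I would argue by induction on $\dim G$. Given $\RJ\in\CO\CC(\CU)$ (resp.\ $\CO\CS(\CU)$), the idea is to build $f$ by first choosing, on each Cartan $\Ft$, a putative Fourier-theoretic lift of $\Fa_{\Sigma^+_\mathrm{im}(\Ft)}\RJ_\Ft$ with the correct support/Schwartz behavior (using $I_4$), then glue these local pieces along the regular set via a $G(\BR)$-equivariant partition of unity around a representative of each Cartan class, and finally extend across the singular locus. The extension is the delicate step: the obstruction to smooth extension across a noncompact wall is exactly the jump $I_3(\Fg)$, while the obstruction across a compact wall is controlled by $I_2(\Fg)$. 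Applying Harish-Chandra's descent isomorphism $(\Fg,G(\BR))\rightsquigarrow(\Fg_{X_0},G_{X_0}(\BR))$ near each semisimple $X_0$ reduces matching the singular behavior to an inductive instance of the theorem for the Levi-type group $G_{X_0}$, where the $\Fs\Fl_2(\BR)$-case and $\Fs\Fu_2$-case are handled by direct computation.

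The hardest step, and the main obstacle, is the inductive gluing in the surjectivity proof: one must verify that the locally constructed lifts coming from neighbouring Cartans produce a single globally smooth (or Schwartz) function on $\Fg(\BR)$. This amounts to showing that the jump and smoothness constraints $I_2(\Fg),I_3(\Fg)$—which a priori only reflect the \emph{one-dimensional} transverse behaviour along a single root—together with the higher-derivative versions encoded by letting $u\in\RI(\Ft)$ range freely, are sufficient to control the full transverse Taylor expansion of $f$ normal to every stratum of the semisimple stratification of $\Fg(\BR)$. Carrying this out rigorously requires Bouaziz's careful bookkeeping of sign-functions and normalized discriminants, combined with Harish-Chandra's limit formulas relating orbital integrals across Cartans; the archimedean topology (smooth vs. merely continuous) makes this substantially more subtle than its $p$-adic analogue.
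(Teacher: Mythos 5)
The paper does not prove Theorem~\ref{thm:orb-spectral}; it is stated as a citation to Bouaziz~\cite{MR1248081} and used as a black box, so there is no internal proof to compare your proposal against. As a summary of Bouaziz's strategy your outline is broadly consonant with the Harish-Chandra methodology the cited paper follows: the Weyl integration formula as organizing principle, descent to $G_{X_0}$ near semisimple points, the $\Fs\Fl_2(\BR)$-computation for the jump relation $I_3(\Fg)$, the $\Fs\Fu_2$-descent for smoothness across compact imaginary walls ($I_2(\Fg)$), Harish-Chandra's uniform bounds for $I_1(\Fg)$ and $I_4(\Fg)$, and an inductive construction for surjectivity. You are also right to flag the gluing/extension step in surjectivity as the genuine technical core; your sketch there remains at the level of a strategy rather than a proof (no precise construction of the local lifts, no verification that the descent data match on overlaps, no treatment of the passage from the compactly supported case $\CO\CC(\CU)$ to the Schwartz case $\CO\CS(\CU)$ where the growth estimates of $I_4(\CS)$ must propagate through the induction). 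Since the present paper only needs the statement, not the proof, this level of detail is acceptable for the purposes of Section~\ref{subsec:OrbSt}; a self-contained proof would require carrying out Bouaziz's estimates in full.
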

Similarly, following \cite{shelsteadinner} and \cite{MR1296557},  
we obtain the spectral characterization for stable orbital integrals.  

Let $\CS\CC(\CU)$ (resp. $\CS\CS(\CU)$)  
be the space of stably invariant functions  
\(\RJ^\st \in \CC^\infty(\CU_\reg)\)  
(i.e., constant on stable conjugacy classes in $\CU_\reg$)  
satisfying the same conditions $I_1(\Fg) = I^\st_1(\Fg)$, $I_2(\Fg) = I^\st_2(\Fg)$, $I_4(\Fg) = I^\st_4(\Fg)$ as above, except for a modified jump condition:

\begin{itemize}
\item[\(I^\st_3(\Fg)\):]  
For any jump datum 
$(X_0, \Ft, \Sigma^+_\mathrm{im}(\Ft), \Ft', \Sigma^+_\mathrm{im}(\Ft'), s)$  
and any constant-coefficient differential operator $u\in \RI(\Ft)$ on $\Ft$,  
we have
\begin{align*}
\partial(u)\big(\Fa_{\Sigma^+_\mathrm{im}(\Ft)} \RJ^\st_\Ft\big)(X_0^+)
-
\partial(u)\big(\Fa_{\Sigma^+_\mathrm{im}(\Ft)} \RJ^\st_\Ft\big)(X_0^-)
&=
2i \,
\partial(s\cdot u)\big(\Fa_{\Sigma^+_\mathrm{im}(\Ft')} \RJ^\st_{\Ft'}\big)(X_0).
\end{align*}
\end{itemize}

\begin{thm}[\cite{shelsteadinner}\cite{MR1296557}]\label{thm:orb-spectral-stable}
With the notation above, the map
\[
f \in \CC^\infty_c(\CU) 
\quad 
(\text{resp. } f \in \CS(\Fg(\BR)))
\quad \longmapsto\quad
\RJ_G^\st(\cdot,f)|_{\CU_\reg}
\]
lands in $\CS\CC(\CU)$ $($resp. $\CS\CS(\CU)$$)$ and is surjective.
\end{thm}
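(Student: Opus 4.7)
The plan is to establish the two assertions of the theorem separately: that $f \mapsto \RJ_G^\st(\cdot,f)|_{\CU_\reg}$ has image in $\CS\CC(\CU)$ (resp.\ $\CS\CS(\CU)$), and that this map is surjective. Both directions will be reduced, as much as possible, to the ordinary-orbital-integral analogue in Theorem \ref{thm:orb-spectral}.

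For the inclusion, I would write
\[
\RJ_G^\st(X,f) = \sum_{\CO_{X'}\in \Fs_G^{-1}(\CO_X^\st)} \RJ_G(X',f),
\]
a finite sum of ordinary orbital integrals to which Theorem \ref{thm:orb-spectral} applies termwise. The conditions $I_1^\st(\Fg) = I_1(\Fg)$, $I_2^\st(\Fg) = I_2(\Fg)$, and $I_4^\st(\Fg) = I_4(\Fg)$ are inherited directly from the $G(\BR)$-invariant case. The substantive point is the modified jump condition $I_3^\st(\Fg)$: the factor $i\,d(X_0)$ appearing in the ordinary jump formula must aggregate to $2i$ after summation over $\Fs_G^{-1}(\CO_{X_0}^\st)$. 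I would verify this by a local case analysis at a jump datum: near such a point, the fiber $\Fs_G^{-1}$ of the adjacent stable class contains either one $G(\BR)$-orbit (with $d(X_0)=2$) or two $G(\BR)$-orbits (each with $d(X_0)=1$), and both cases produce the required total $2i$.

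For surjectivity, given $\RJ^\st\in\CS\CC(\CU)$ (resp.\ $\CS\CS(\CU)$), I would construct an invariant candidate $\RJ(X) = w(X)^{-1}\RJ^\st(X)$ on $\Fg_\reg(\BR)\cap\CU$, where $w(X) = |\Fs_G^{-1}(\CO_X^\st)|$, so that formally
\[
\sum_{X'\in\Fs_G^{-1}(\CO_X^\st)} \RJ(X') = \RJ^\st(X).
\]
Since $w$ is locally constant on $\Fg_\reg(\BR)$, conditions $I_1(\Fg)$, $I_2(\Fg)$, and $I_4(\Fg)$ for $\RJ$ transfer from the corresponding $I^\st$-conditions for $\RJ^\st$. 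The condition $I_3(\Fg)$ would be checked at each jump datum by tracking how the stable jump $2i\cdot(\ldots)$ across the wall distributes into $G(\BR)$-orbit contributions weighted by $w^{-1}$, inverting the analysis from the first part. Once $\RJ \in \CO\CC(\CU)$ (resp.\ $\CO\CS(\CU)$), Theorem \ref{thm:orb-spectral} produces $f\in\CC^\infty_c(\CU)$ (resp.\ $\CS(\Fg(\BR))$) with $\RJ_G(\cdot,f) = \RJ$, and resumming over the stable class yields $\RJ_G^\st(\cdot,f) = \RJ^\st$.

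The main obstacle is the jump analysis in both directions, which requires a precise matching between the $G(\BR)$-orbit decomposition of a stable class near $X_0$ and the Cayley transform to the adjacent Cartan $\Ft'$. The cleanest way to handle it is via Galois cohomology: identify $\Fs_G^{-1}(\CO_X^\st)$ with $\ker(H^1(\BR,T_X)\to H^1(\BR,G))$ and analyze how this fiber behaves under the Cayley transform attached to the noncompact imaginary root $\alpha$. The hard step is showing that the local factor $d(X_0)$ records exactly this Galois-cohomological datum, which is essentially the content of Shelstad's work \cite{shelsteadinner}; granted this, the remainder of the argument reduces cleanly to Bouaziz's Theorem \ref{thm:orb-spectral}.
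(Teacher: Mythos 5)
The paper does not actually prove this theorem; it is stated with the attributions \cite{shelsteadinner} and \cite{MR1296557} and the only surrounding text is ``Similarly, following \cite{shelsteadinner} and \cite{MR1296557}, we obtain the spectral characterization for stable orbital integrals.'' So there is no argument of record in the paper to compare against. Your sketch reconstructs the natural reduction to Theorem~\ref{thm:orb-spectral}, which is indeed the structure of the cited proofs, and your inversion $\RJ = w^{-1}\RJ^{\st}$ for surjectivity is the right device: since $w$ is constant on each $\Ft_\reg(\BR)$ (it equals $|\ker(H^1(\BR,T)\to H^1(\BR,G))|$, which depends only on $T$), it is locally constant and $G(\BR)$-invariant on $\Fg_\reg(\BR)$, so $I_1,I_2,I_4$ transfer both ways, and the resumming $\sum_{X'}\RJ(X') = \RJ^{\st}(X)$ is immediate from stable invariance.

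What your sketch correctly isolates but necessarily defers is the single numerical lemma that both directions hinge on at the jump condition: for a jump datum $(X_0,\Ft,\Ft',\ldots)$ one needs
\[
d(X_0)\,w_\Ft \;=\; 2\,w_{\Ft'},
\]
where $w_\Ft, w_{\Ft'}$ are the constants $w|_{\Ft_\reg(\BR)}$ and $w|_{\Ft'_\reg(\BR)}$. Substituting $\RJ_\Ft = w_\Ft^{-1}\RJ^{\st}_\Ft$, $\RJ_{\Ft'} = w_{\Ft'}^{-1}\RJ^{\st}_{\Ft'}$ into $I_3(\Fg)$ and using $I_3^{\st}(\Fg)$ for $\RJ^{\st}$ reduces exactly to this identity, and the same identity governs the aggregation of jumps in the forward (inclusion) direction. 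Your heuristic case analysis (one orbit with $d=2$, or two orbits each feeding $d=1$) is the correct picture, but the factor $d(X_0)$ is determined by whether $s_\alpha$ lifts to $G_{X_0}(\BR)$ and is not a per-orbit quantity, so the precise statement is really the cohomological identity above. This, together with the fact that all $G(\BR)$-orbits in a stable class intersect the fixed Cartan $\Ft(\BR)$ (so that $\RJ^{\st}_\Ft = \sum_{w\in W_\BR\backslash W(\Ft/\BR)} \RJ_{G,\Ft}(w\cdot X, f)$ and the $I_2$ sign bookkeeping with $\Fa_{\Sigma^+_{\mathrm{im}}(\Ft)}$ closes up), is exactly the content of Shelstad's paper. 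So the proposal is not wrong, but it is a sketch whose substantive step is an appeal to the very references the theorem cites, which is also all the paper does.
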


\subsection{Existence of the smooth transfer}\label{subsec:smooth-transfer}

In this subsection, we review the existence of the endoscopic smooth transfer  
for both test and Schwartz functions on $\Fg(\BR)$.  

\begin{thm}\label{thm:smooth-transfer}
Let $f_G \in \CC^\infty_c(\Fg(\BR))$ $($resp.~$f_G \in \CS(\Fg(\BR))$$)$.  
Then there exists $f_H \in \CC^\infty_c(\Fh(\BR))$ $($resp.~$f_H \in \CS(\Fh(\BR))$$)$  
such that
\[
\RJ^\st_H(X_H, f_H) = \RJ_{G,H}(X_H, f_G),
\quad
\forall\, X_H \in \Sigma(\Fh_{\Gstarreg}(\BR)).
\]
\end{thm}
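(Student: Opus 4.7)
The plan is to apply Bouaziz's spectral characterization of stable orbital integrals (Theorem~\ref{thm:orb-spectral-stable}) to the function
\[
X_H \longmapsto \RJ_{G,H}(X_H, f_G), \qquad X_H \in \Fh_{\Gstarreg}(\BR),
\]
and show it extends to an element of $\CS\CC(\Fh(\BR))$ (resp.\ $\CS\CS(\Fh(\BR))$). Once this is established, the existence of $f_H$ with the required stable orbital integrals follows immediately from the surjectivity statement in Theorem~\ref{thm:orb-spectral-stable}. This is the Lie algebra analogue of the strategy used by Shelstad~\cite{shelsteadinner} and Bouaziz~\cite{MR1296557} in the group setting; the main work is to verify that the four conditions $I^\st_j(\Fh)$ are satisfied.

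The conditions $I^\st_1(\Fh)$, $I^\st_2(\Fh)$, and $I^\st_4(\Fh)$ are handled by relatively soft harmonic-analytic arguments. Fix a Cartan subalgebra $\Ft_H \subset \Fh$ defined over $\BR$, and for each $X_H \in \Ft_H(\BR) \cap \Fh_{\Gstarreg}(\BR)$ with $\Del(X_H, X_G)\neq 0$, note that the transfer factor arises from a diagram $\FD(X_H, X_G; \BR)$ and therefore identifies a Cartan subalgebra $\Ft_G \subset \Fg$ defined over $\BR$ together with an $\BR$-isomorphism $\CI^{X_G}_{X_H} \colon \Ft_H \to \Ft_G$. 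Restricting $f_G$ to orbits through $\Ft_G$, the bounds on $\RJ_G(X_G, f_G)$ guaranteed by Theorem~\ref{thm:orb-spectral} together with the local constancy of $\Del_\RI \Del_{\RI\RI\RI}$ and the explicit product formula~\eqref{eq:defin:Del_II} for $\Del_{\RI\RI}$ give the desired control of $\partial(u)\RJ_{G,H}(\cdot, f_G)$ on compact subsets of $\Ft_H(\BR)$ ($I^\st_1$), the extension of $\Fa_{\Sigma^+_\mathrm{im}(\Ft_H)}\RJ_{G,H}(\cdot, f_G)$ across imaginary compact walls ($I^\st_2$), and the compact support or Schwartz decay on the Cartan ($I^\st_4$). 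Here one uses that the imaginary roots of $\Ft_H$ identify, under $\CI^{X_G}_{X_H}$, with a subset of the imaginary roots of $\Ft_G$ coming from $H$, so the $\Del_{\RI\RI}$ factor accounts precisely for the imaginary roots that are symmetric but do not come from $H$.

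The crux is the stable jump condition $I^\st_3(\Fh)$, which requires matching the jump of $\RJ_{G,H}(\cdot, f_G)$ across an imaginary noncompact wall of $\Ft_H$ with the sum of jumps of $\RJ_G(\cdot, f_G)$ on those Cartan subalgebras of $\Fg$ obtained from $\Ft_H$ by Cayley transforms through walls corresponding to roots in $\Sigma_\mathrm{im}(\Ft_H)$. The analysis proceeds by choosing a jump datum in $\Fh$ at a semisimple element $X^H_0$ with coroot $H_\alpha$, transporting it through a diagram to a family of jump data on the $\Fg$-side, and then summing the jump relations provided by $I_3(\Fg)$ for $\RJ_G(\cdot, f_G)$ against the transfer factor. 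The combinatorics of this sum is governed by Shelstad's behavior of transfer factors across Cayley transforms: the $\Del_\RI\Del_{\RI\RI\RI}$ contributions interact with the $\kappa$-characters so as to replace the factor $d(X_0)/2$ appearing in $I_3(\Fg)$ by $1$, producing exactly the coefficient $2i$ on the right hand side of $I^\st_3(\Fh)$. This is the main technical step, and it is essentially identical to the group-side calculation of Shelstad, requiring only that the Lie algebra Cayley transforms and the $\Del_{\RI\RI}$ factor are tracked through the procedure; Bouaziz's extension to Schwartz functions in~\cite{MR1296557} adapts without change because the relevant estimates are preserved under passage to $\CS(\Fg(\BR))$.

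Having verified all four conditions, Theorem~\ref{thm:orb-spectral-stable} produces $f_H \in \CC^\infty_c(\Fh(\BR))$ (resp.\ $\CS(\Fh(\BR))$) with $\RJ^\st_H(\cdot, f_H) = \RJ_{G,H}(\cdot, f_G)$ on the regular semisimple locus. The only genuine obstacle is the jump calculation for $I^\st_3(\Fh)$; once the transfer factor behavior under Cayley transform in the Lie algebra setting is recorded, everything else is a direct transcription of the arguments of Shelstad and Bouaziz.
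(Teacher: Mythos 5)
Your proposal takes essentially the same approach as the paper: reduce to showing that $X_H \mapsto \RJ_{G,H}(X_H, f_G)$ lands in $\CS\CC(\Fh(\BR))$ (resp.\ $\CS\CS(\Fh(\BR))$) and then invoke the surjectivity of Theorem~\ref{thm:orb-spectral-stable}. The paper leaves the verification of the conditions $I^\st_1$--$I^\st_4$ to a citation of Shelstad's group-side argument; your sketch of how those conditions are checked, with $I^\st_3$ (the stable jump condition across noncompact imaginary walls via Cayley transforms) isolated as the crux, is exactly the content the paper is delegating to that reference.
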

To prove the theorem, it suffices to show that the function
\[
X_H \in \Sigma(\Fh_\reg(\BR))
\quad \longmapsto \quad
\begin{cases}
\RJ_{G,H}(X_H, f_G), & X_H \in \Sigma(\Fh_{\Gstarreg}(\BR)), \\[4pt]
0, & \text{otherwise},
\end{cases}
\]
belongs to
\(\CS\CC(\Fh(\BR))\) \( (\text{resp. } \CS\CS(\Fh(\BR))) \)  
for every $f_G \in \CC^\infty_c(\Fg(\BR))$ \( (\text{resp. } \CS(\Fg(\BR))) \).  

The proof is entirely analogous to \cite{MR532374} and is therefore omitted.

\section{Waldspurger's identity}\label{sec:waldspurgerid}

In this section, we are going to establish Theorem \ref{thm:intro:FTcompatible}, which we restate for convenience.

\begin{thm}\label{thm:Waldspurger-Id}
For $(X_H,X_G)\in \Fh_{\Gstarreg}(\BR)\times \Fg_\reg(\BR)$, set 
\begin{align*}
\RD_{G,H}(X_H,X_G) &= \gam_\psi(\Fg)
\sum_{\CO_{X_G^\p}\in \Gam(\Fg_\reg(\BR))}
\Del(X_H,X_G^\p)\wh{i}^G(X_G^\p,X_G),
\\
\wt{\RD}_{G,H}(X_H,X_G) &= 
\gam_\psi(\Fh)
\sum_{
\substack{
\CO_{X_H^\p}\in \Fs^{-1}_H(\CO^\st_{X_H})
\\ 
\CO_{X_H^{\p\p}}\in \Gam(\Fh_{\Gstarreg}(\BR))
}
}
w(X_H^{\p\p})^{-1}
\Del(X_H^{\p\p},X_G)
\wh{i}^H(X_H^\p,X_H^{\p\p}),
\end{align*}
where 
\begin{itemize}
\item $\CO_{X_G^\p}\in \Gam(\Fg_\reg(\BR))$, i.e., $X_G^\p\in \Fg_\reg(\BR)$ modulo $G(\BR)$-conjugacy;

\item $\CO_{X_H^\p}\in \Fs_H^{-1}(\CO_{X_H}^\st)$, i.e., $X_H^\p$ runs over the $H(\BR)$-conjugacy classes in the stable conjugacy class of $X_H$; 

\item $X^{\p\p}_H\in \Fh_{\Gstarreg}(\BR)$ modulo $H(\BR)$-conjugacy;

\item $w(X^{\p\p}_H) = |\Fs_H^{-1}(\CO_{X^{\p\p}_H})|$, the number of $H(\BR)$-conjugacy classes within the stable conjugacy class of $X^{\p\p}_H$;

\item $\gam_\psi(\Fg)$ and $\gam_\psi(\Fh)$ are the Weil constants associated with $(\Fg(\BR),\langle \cdot,\cdot\rangle_\Fg)$ and $(\Fh(\BR),\langle \cdot,\cdot\rangle_\Fh)$, respectively $($\cite[\S 1]{jlgl2}$)$. Here the non-degenerate conjugation-invariant symmetric bilinear form $\langle \cdot,\cdot\rangle_\Fh$ on $\Fh(\BR)$ is induced from $\langle \cdot,\cdot \rangle_\Fg$ as in \cite[VIII.~6]{MR1344131}.
\end{itemize}
Then for any $X_H\in \Fh_{\Gstarreg}(\BR)$ that is fixed, the following identity holds for any $X_G\in \Fg_\reg(\BR)$:
$$
\RD_{G,H}(X_H,X_G) = \wt{\RD}_{G,H}(X_H,X_G).
$$
\end{thm}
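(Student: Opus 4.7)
The plan is to follow the three-step outline sketched in Section~\ref{sec:intro}. First, I would perform a parabolic descent reduction along the lines of \cite[\S 6]{waldtransfert}: if the endoscopic datum is not elliptic, or $X_H$ is not $G^*$-elliptic, then both sides of \eqref{eq:intro:2} decompose through constant-term integrals along a proper Levi of $G^*$ paired with a matching proper Levi of $H$. Combined with the commutativity of the Fourier transform with parabolic descent on Lie algebras and induction on $\dim G$, this reduces the theorem to the case where $(H,s,\xi)$ is elliptic and $X_H\in \Fh(\BR)_\el$.

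Assume now this reduction is in place, and regard $\RD_{G,H}(X_H,\cdot)$ and $\wt{\RD}_{G,H}(X_H,\cdot)$ as invariant tempered distributions on $\Fg(\BR)$. I would first verify \eqref{eq:intro:2} directly on the elliptic locus $\Fg(\BR)_\el$ by invoking Rossmann's explicit formula \cite[Corollary~(15),~p.~217]{MR0508985} for $\wh{j}^G(X,Y)$, and symmetrically for $\wh{j}^H$, when both arguments are elliptic regular: on each compact Cartan subalgebra the function $\wh{j}^G$ reduces to a finite Weyl-averaged sum of exponentials of $\langle\cdot,\cdot\rangle_\Fg$. The transfer factor $\Del(X_H,X_G)$ restricted to the elliptic loci is controlled by the isomorphism $\eta$ of Remark~\ref{rmk:admissible_embed_Weylequivariant}, and a direct term-by-term comparison along matching elliptic tori reduces the identity to a Weyl-character computation involving the Weil constants $\gam_\psi(\Fg)$ and $\gam_\psi(\Fh)$.

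Finally, to upgrade the identity from $\Fg(\BR)_\el$ to all of $\Fg_\reg(\BR)$, I would apply Harish-Chandra's uniqueness theorem \cite[Theorem~13,~p.~116]{MR0473111}: an $\RI(\Fg)$-finite invariant tempered distribution of elliptic type is uniquely determined by its restriction to $\Fg(\BR)_\el$. For $\RD_{G,H}(X_H,\cdot)$ the required properties are immediate, since it is a finite $\BC$-linear combination of the distributions $\wh{j}^G(X_G',\cdot)$, each of which satisfies the hypotheses. For $\wt{\RD}_{G,H}(X_H,\cdot)$, which is naturally built from Fourier transforms of orbital integrals on $\Fh(\BR)$, one has to transport $\RI(\Fh)$-finiteness of the $\wh{i}^H(X_H',\cdot)$ to $\RI(\Fg)$-finiteness across the transfer factor, using both the Weyl-equivariance of $\eta$ and the fact that $\Del(X_H'',X_G)$ is essentially constant along endoscopic matching; the elliptic-type property then follows because ellipticity of $X_H$ in $H$ forces the contributing tori on the $G$-side to be elliptic as well.

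The step I expect to be the main obstacle is this last one: verifying that $\wt{\RD}_{G,H}(X_H,\cdot)$ is $\RI(\Fg)$-finite and of elliptic type. The transport of differential-operator finiteness from $\RI(\Fh)$ to $\RI(\Fg)$ across the transfer factor, while very natural from the endoscopic philosophy, must be set up carefully so as to fit the conditions $I_1(\Fg)$--$I_4(\Fg)$ of Bouaziz's spectral characterization recalled in Subsection~\ref{subsec:OrbSt}. Once this structural compatibility is in place, Harish-Chandra's theorem collapses the problem to the elliptic-on-elliptic Rossmann computation of the second step.
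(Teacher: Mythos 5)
Your overall plan coincides with the paper's: the same three-step strategy (reduction to the elliptic situation via Waldspurger's \S 3--\S 6, direct verification on $\Fg(\BR)_\el$ via Rossmann's formula on compact Cartans, and extension to all of $\Fg_\reg(\BR)$ via Harish-Chandra's uniqueness theorem). You also correctly single out the bottleneck: proving that $\wt{\RD}_{G,H}(X_H,\cdot)/\RD^G(\cdot)^{1/2}$ is a tempered, invariant, $\RI(\Fg)$-finite distribution of elliptic type. For the elliptic-on-elliptic comparison you are a bit vague; the paper needs a counting lemma (Lemma~\ref{lem:counting}) to parametrize the $G(\BR)$-conjugacy classes in a stable class and the $H(\BR)$-classes appearing in the transfer sums by cosets of real Weyl groups inside $W^G$, before Rossmann's formula can be plugged in and the two Weyl sums matched term by term. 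This is doable along the lines you describe, but it is a real piece of combinatorics, not just ``a Weyl-character computation.''

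The more serious issue is the mechanism you propose for the $\RI(\Fg)$-finiteness of $\wt{\RD}_{G,H}(X_H,\cdot)$. You suggest fitting the conditions $I_1(\Fg)$--$I_4(\Fg)$ of Bouaziz's spectral characterization, but those conditions characterize the local behavior (jump relations, growth) of orbital-integral functions $\RJ_G(\cdot,f)$ on $\Ft(\BR)\cap\Fg_\reg(\BR)$; they are not what the uniqueness theorem asks for, which is annihilation by a cofinite ideal of $\RI(\Fg)$ generated by characters $\chi^G_{[X]}$ at elliptic $X$. The paper's route is Harish-Chandra semi-simple descent (Proposition~\ref{pro:HC-descent_to_torus}): descend to a $G$-good neighborhood $\omega_{X_G}$ of a Cartan $\Ft_{X_G}$, where $\partial(z)$ is intertwined with $\partial(z_{X_G})$; then use the matching isomorphism $\CI^{X_G}_{X_H''}$ together with Chevalley's restriction theorem to make the square $\RI(\Fg)\to\RI(\Fh)\to\RI(\Ft)$ commute, so the $\RI(\Fh)$-eigendistribution property of $\wh{j}^H(X_H',\cdot)$ (Lemma~\ref{lem:hatJeigen}) pushes back along $\RI(\Fg)\hookrightarrow\RI(\Fh)$. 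Crucially, $\Del(X_H'',\cdot)$ and $w(X_H'')$ are locally constant on such a neighborhood, so they commute with $\partial(z)$, and the $G^*$-ellipticity of $X_H$ forces the eigencharacters to be elliptic. Without this descent argument---or an explicit substitute that plays the same role---your third step does not close.
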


\subsection{Elliptic situation}\label{subsec:elliptic_situation}

Thanks to the discussion in \cite[\S3--\S6]{waldtransfert}, whose arguments remain valid over the real field 
(provided that \cite[Lemme~6.6]{waldtransfert} is replaced by \cite[(3.4.4)]{beuzart2020local}), 
the proof of Theorem \ref{thm:Waldspurger-Id} can be reduced to the following situation:
\begin{num}
\item\label{num:reduce_to_elliptic} 
The group $G$ is semisimple and simply connected (\cite[\S4]{waldtransfert}), 
and $X_H$ is $G^*$-elliptic; that is, $X_H$ is associated with a diagram 
$\FD(X_H,X_{G^*};\BR)$ in which $X_{G^*}$ is elliptic in $\Fg^*_\reg(\BR)$. 
By \cite[Lemme~6.1]{waldtransfert}, this is equivalent to the following:
\begin{itemize}
\item $(H,s,\xi)$ is an elliptic endoscopic datum, i.e., 
$\xi\big([Z_{\h{H}}^\Gam]^\circ\big)\subset Z_{\h{G}}$;
\item $X_H\in \Fh(\BR)_\el$.
\end{itemize}
\end{num}

In what follows, under the reduction \eqref{num:reduce_to_elliptic}, 
we establish Theorem \ref{thm:Waldspurger-Id} in the case where 
$X_G$ is also elliptic in $\Fg_\reg(\BR)$.

\begin{pro}\label{pro:Waldspurger-Id:elliptic}
Under \eqref{num:reduce_to_elliptic}, 
Theorem \ref{thm:Waldspurger-Id} holds when $X_G$ is elliptic in $\Fg_\reg(\BR)$.
\end{pro}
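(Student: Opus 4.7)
The plan is to apply Rossmann's explicit formula \cite[Corollary~15, p.~217]{MR0508985}, which expresses the Fourier transform of an elliptic orbital integral in closed form when evaluated at another elliptic element, to reduce both $\RD_{G,H}(X_H, X_G)$ and $\wt{\RD}_{G,H}(X_H, X_G)$ to finite exponential sums. Under the reduction \eqref{num:reduce_to_elliptic} and the extra hypothesis $X_G \in \Fg(\BR)_\el$, I first fix a diagram $\FD(X_H, X_{G^*}; \BR)$ with $X_{G^*}$ elliptic; then any $X'_G$ with $\Del(X_H, X'_G)\neq 0$ arises from a diagram for $X_H$ and is therefore stably conjugate to $X_{G^*}$, in particular elliptic. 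Consequently the orbits $\CO_{X'_G}$ contributing to $\RD_{G,H}(X_H, X_G)$ are parameterized by the finite set of $G(\BR)$-conjugacy classes inside the single stable class $\CO^\st_{X_{G^*}}$, and a symmetric parameterization governs the pairs $(\CO_{X'_H}, \CO_{X''_H})$ on the right-hand side.

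Substituting Rossmann's formula into each $\wh{i}^G(X'_G, X_G)$ and $\wh{i}^H(X'_H, X''_H)$ converts them into finite exponential sums of the schematic form
\[
\frac{(-1)^{q(G)}\,\gamma_\psi(\Fg)^{-1}}{|W(G(\BR), T_{X_G})|}\,\sum_{w} \epsilon(w)\,\psi\bigl(\langle w X'_G, X_G\rangle_\Fg\bigr),
\]
with an analogous expression in the $H$-variables. The Weil constants $\gamma_\psi(\Fg)$ and $\gamma_\psi(\Fh)$ appearing in Definition~\ref{defin:intro} cancel the inverse Weil prefactors produced by Rossmann, and the remaining ratio of signs $(-1)^{q(G)-q(H)}$ is precisely the one predicted by \cite[VIII.6]{MR1344131} for $\gamma_\psi(\Fg)/\gamma_\psi(\Fh)$ once the bilinear forms $\langle\cdot,\cdot\rangle_\Fg$ and $\langle\cdot,\cdot\rangle_\Fh$ are matched through the diagram. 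The identity to prove thus becomes an equality between two explicit exponential polynomials in $X_G$, indexed by the two finite cohomological sets described above.

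The remaining task is a term-by-term coefficient comparison. I would invoke (i) the Weyl-equivariance of $\eta\colon \BT_H \to \BT$ from Remark~\ref{rmk:admissible_embed_Weylequivariant} to identify the Weyl-group sums on the two sides through the diagram; (ii) the compatibility of $\langle\cdot,\cdot\rangle_\Fh$ with $\langle\cdot,\cdot\rangle_\Fg$ under the isomorphism $\CI^{X_G}_{X_H}$, so that the exponential kernels agree; and (iii) the product decomposition \eqref{eq:defin:Del-4} of the transfer factor, together with the explicit cohomological formulas \eqref{eq:defin:Del_I} and \eqref{eq:defin:Del_III}. The main obstacle I anticipate is the cohomological bookkeeping that pairs the class $\kappa_{X_H}$ driving $\Del_\RI\cdot \Del_{\RI\RI\RI}$ against the indexing of $G(\BR)$-conjugacy classes within $\CO^\st_{X_{G^*}}$: this is essentially the local Kottwitz-Shelstad duality specialized to the elliptic torus $T_{X_{G^*}}$, and demonstrating that it matches, Weyl term by Weyl term, the structure produced by Rossmann's formula on both sides is what ultimately forces the equality.
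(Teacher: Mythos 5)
Your overall strategy coincides with the paper's: reduce both $\RD_{G,H}$ and $\wt{\RD}_{G,H}$ to finite exponential sums via Rossmann's formula, parameterize the contributing orbits on both sides (the paper's Lemma~\ref{lem:counting}, giving the bijections with $W^G/W^G(\Ft^e;\BR)$ and $W^G/W^H(\Ft^e_H;\BR)$ respectively), dispose of the Weil constants, and then compare coefficients Weyl-term by Weyl-term. So the architecture is right.

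However, there is a gap in the final and decisive step: your anticipated ``main obstacle'' is a misdirection. You propose to match the cohomological factor $\Del_\RI\cdot\Del_{\RI\RI\RI}$ against the conjugacy-class indexing via an explicit appeal to local Kottwitz--Shelstad duality for the elliptic torus. The paper avoids any such computation entirely. The key observation (Remarks~\ref{rmk:Del_I_dependence} and~\ref{rmk:Del_III_dependence}) is that $\Del_\RI$ and $\Del_{\RI\RI\RI}$ depend only on the isomorphism $\Ad_{g^*}\colon\BT\to T_{X_{G^*}}$, and not on the particular regular semisimple element. For each $w\in W^G$, the two pairs that the term-by-term comparison asks you to match --- $(X_H,\,w\cdot X^H_G)$ on the left and $((\CI^e)^{-1}(w^{-1}\cdot X_G),\,X_G)$ on the right --- arise from \emph{the same} torus isomorphism $\Ad_w\circ\CI^e$. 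Hence $\Del_\RI$ and $\Del_{\RI\RI\RI}$ are literally equal on the two sides and cancel with no duality computation at all. The only transfer-factor contribution that genuinely depends on the element (and so must be checked) is $\Del_{\RI\RI}$; the paper matches it against the signs $\RD^{1/2}/\pi_{\Sig^+}$ coming from the Weyl denominators in Rossmann's formula, using the explicit product formula \eqref{eq:defin:Del_II} and the fact that all roots of the fundamental Cartan $\Ft^e$ are imaginary. If you pursue the Kottwitz--Shelstad route you propose, you would be attempting to extract from two exponential sums a cohomological structure that is not there, and the proof would stall. Spotting that $\Del_\RI\,\Del_{\RI\RI\RI}$ is element-independent is the idea that makes the argument go through, and it is the piece missing from your outline.
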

\begin{proof}

To prove the proposition, we first recall the following theorem of Rossmann.

\begin{thm}[\cite{MR0508985}, Corollary~(15), p.~217]\label{thm:rossmann_formula}
Let $\Ft^e(\BR)$ be an elliptic Cartan subalgebra of $\Fg(\BR)$. 
For any $X_G\in \Ft^e_\reg(\BR):=\Ft^e(\BR)\cap \Fg_\reg(\BR)$, the following hold:
\begin{enumerate}
\item $\wh{j}^G(X_G,\cdot)$ is locally integrable and is an $\RI(\Fg)$-eigendistribution on $\Fg(\BR)$;

\item Fix a positive root system $\Sig^+(\Ft^e)\subset \Sig(\Ft^e)$. 
For $X_G'\in \Ft^e_\reg(\BR)$, 
\begin{align*}
\wh{i}^G(X_G,X_G')
&= \RD^G(X_G')^{1/2}\,\wh{j}^G(X_G,X_G') 
\\ 
&= 
(-i)^{\frac{\dim(\Fg/\Ft^e)}{2}}
(-1)^{\frac{\dim(\Fg/\Fk)}{2}}
\frac{\RD^G(X_G)^{1/2}}{\pi_{\Sig^+(\Ft^e)}(X_G)}
\frac{\RD^G(X_G')^{1/2}}{\pi_{\Sig^+(\Ft^e)}(X_G')}
\\
&\quad 
\Big(
\sum_{w\in W^G(\Ft^e;\BR)}
\eps_G(w)\,
e^{\,i\langle w\cdot X_G,\,X_G'\rangle_\Fg}\,
\Big)
.
\end{align*}
\end{enumerate}
Here:
\begin{itemize}
\item $\Fk$ is the $+1$-eigenspace of the Cartan involution on $\Fg$;

\item $\pi_{\Sig^+(\Ft^e)}(X_G)=\prod_{\alp\in \Sig^+(\Ft^e)} \alp(X_G)$, so in particular $\RD^G(X_G)^{1/2} = |\pi_{\Sig^+(\Ft^e)}(X_G)|$.
Moreover, the product
\[
\frac{\RD^G(X_G)^{1/2}}{\pi_{\Sig^+(\Ft^e)}(X_G)}
\frac{\RD^G(X_G')^{1/2}}{\pi_{\Sig^+(\Ft^e)}(X_G')}
\]
is independent of the choice of the positive root system $\Sig^+(\Ft^e)$;

\item $W^G(\Ft^e;\BR)$ is the real Weyl group of $\Ft^e(\BR)$ in $\Fg(\BR)$, and 
$\eps_G(w)=\det\bigl(w:\Ft^e\to \Ft^e\bigr)$ is its signature.
\end{itemize}
\end{thm}

\begin{rmk}\label{rmk:roosmann_formula}
Compared to the original statement in \cite[Corollary~(15), p.~217]{MR0508985}, 
our formula misses an additional factor $|W(\Ft^e;\BR)|^{-1}$. 
This normalization arises from the choice of measure explained 
in the last line of \cite[p.~218]{MR0508985}. 
\end{rmk}

We also need the following lemma.

\begin{lem}\label{lem:counting}
We use the notation from Definition \ref{defin:Gstarreg}. Fix a pair $(X_H, X_G) \in \Fh_{\Gstarreg}(\BR) \times \Fg_\reg(\BR)$ associated to a diagram $\FD(X_H, X_G; \BR)$, where $X_H$ is $G^*$-elliptic and $X_G$ is elliptic in $\Fg_\reg(\BR)$. Then the following statements hold:
\begin{enumerate}
\item 
\[
\Fs_G^{-1}(\CO^\st_{X_G}) = 
\bigsqcup_{w \in W^G/W^G(\Ft^e; \BR)} \CO_{w \cdot X_G}.
\]
Here we recall:
\begin{itemize}
    \item 
    $\CO_{X_G}$ denotes the $G(\BR)$-conjugacy class of $X_G$ in $\Gam(\Fg_\reg(\BR)) = \Fg_\reg(\BR)/G(\BR)$, and $\CO^\st_{X_G}$ denotes the stable $G(\BR)$-conjugacy class of $X_G$ in $\Sig(\Fg_\reg(\BR)) = \Fg_\reg(\BR)/G(\BC)$;

    \item
    $\Fs_G: \Gam(\Fg_\reg(\BR)) \to \Sig(\Fg_\reg(\BR))$ is the natural projection;

    \item 
    $W^G$ is the absolute Weyl group of $\Ft^e(\BC)$ in $\Fg(\BC)$.
\end{itemize}

\item 
\begin{align*}
&\left\{X_G'\in \Fg_\reg(\BR) \mid \text{there exists a diagram } \FD(X_H, X_G'; \BR) \right\} / G(\BR) \\
=& \bigsqcup_{w \in W^G/W^G(\Ft^e; \BR)} \CO_{w \cdot X_G} 
= \Fs_G^{-1}(\CO^\st_{X_G}).
\end{align*}

\item 
\begin{align*}
&\left\{X_H'\in \Fh_{\Gstarreg}(\BR) \mid \text{there exists a diagram } \FD(X_H', X_G; \BR) \right\} / H(\BR) \\
=& \bigsqcup_{w \in W^G / W^H(\Ft_H^e; \BR)} \CO_{(\CI^{X_G}_{X_H})^{-1}(w \cdot X_G)}.
\end{align*}
Here $\CI^{X_G}_{X_H} : \Ft_{X_H} \xrightarrow{\sim} \Ft_{X_G}$ is the isomorphism over $\BR$ appearing in Definition \ref{defin:Gstarreg}. Since $X_H$ is $G^*$-elliptic and $X_G$ is elliptic, we may, up to $H(\BR)$- and $G(\BR)$-conjugacy respectively, identify $\Ft_{X_H}(\BR)$ and $\Ft_{X_G}(\BR)$ with fixed elliptic Cartan subalgebras $\Ft^e_H(\BR) \subset \Fh(\BR)$ and $\Ft^e(\BR) \subset \Fg(\BR)$, respectively. We also view $W^H(\Ft_H^e; \BR)$, the real Weyl group of $H(\BR)$, as a subgroup of $W^H$ and hence as a subgroup of $W^G$.
\end{enumerate}
\end{lem}
\begin{proof}
(1) The first statement follows from \cite[Corollary~2.5]{shelsteadinner}. In particular, for any $w \in W^G$, we have $w \cdot X_G \in \Ft^e(\BR) \cap \Fg_\reg(\BR)$.

(2) For the second statement, by Part (1) just established, it suffices to prove the following assertion:
\begin{num}
\item There exists a diagram $\FD(X_H, X_G'; \BR)$ if and only if $X_G'$ and $X_G$ are stably conjugate.
\end{num}

Let $\FD(X_H, X_G; \BR)$ be the diagram as in Definition \ref{defin:Gstarreg}. Since $X_H$ is $G^*$-elliptic, the associated $X_G$ is elliptic in $\Fg_\reg(\BR)$. By \cite[p.~208]{MR0508985}, up to $G(\BR)$-conjugation, there exists a unique elliptic Cartan subalgebra $\Ft^e(\BR)$ in $G(\BR)$.

Suppose that $X_G$ and $X_G'$ are stably conjugate. Then $X_G'$ is also elliptic. Up to $G(\BR)$-conjugation, we may assume both $X_G$ and $X_G'$ lie in $\Ft^e(\BR) \cap \Fg_\reg(\BR)$. Since $G = G_\sc$ is simply connected (by \eqref{num:reduce_to_elliptic}), there exists $g' \in G$ such that $\Ad(g')(X_G) = X_G'$. Moreover, by \cite[Proposition~2.2]{shelsteadinner}, the automorphism $\Ad(g'):\Ft^e \to \Ft^e$ is defined over $\BR$. It follows that there exists a diagram $\FD(X_H, X_G'; \BR)$ with the following data:
\begin{align*}
\begin{cases}
h \in H, \quad g^* \in G^*_\sc, \quad  g'g \in G_\sc, \\[0.5em]
\xymatrix{
T_{X_H} \ar[r]^{\Ad_h}
& \BT_H \ar[r]^{\eta} 
& \BT \ar[r]^{\Ad_{g^*}} 
& T_{X_{G^*}} 
\ar[rrr]^{\Ad_{g} \circ \vphi^{-1} = \CI^{X_G}_{X_{G^*}}} &&& 
T_{X_G} \ar[r]^{\Ad_{g'}} & T_{X_G'},
}
\end{cases}
\end{align*}
which satisfies the compatibility conditions in Definition \ref{defin:Gstarreg}.

Conversely, suppose there exists a diagram $\FD(X_H, X_G'; \BR)$ with the following data:
\begin{align*}
\begin{cases}
\tilde{h} \in H, \quad \tilde{g}^* \in G^*_\sc, \quad \tilde{g} \in G_\sc, \\[0.5em]
\xymatrix{
T_{X_H} \ar[r]^{\Ad_{\tilde{h}}}
& \BT_H \ar[r]^{\eta} 
& \BT \ar[r]^{\Ad_{\tilde{g}^*}} 
& T_{X_{G^*}'} 
\ar[rrr]^{\Ad_{\tilde{g}} \circ \vphi^{-1} = \CI^{X_G'}_{X_{G^*}'}} &&& T_{X_G'}.
}
\end{cases}
\end{align*}
We claim that $X_G$ and $X_G'$ are stably conjugate. Indeed, the automorphism $\Ad_{\tilde{h} h^{-1}} : \BT_H \to \BT_H$ corresponds to an element of the absolute Weyl group $W^H$. By Remark \ref{rmk:admissible_embed_Weylequivariant}, since $\eta$ is equivariant with respect to the inclusion $W^H \hookrightarrow W^G$, the induced automorphism 
\[
\eta \circ \Ad_{\tilde{h} h^{-1}} \circ \eta^{-1} : \BT \to \BT
\]
corresponds to an element $w^G \in W^G$. Similarly, the composition 
\[
\Ad_{\tilde{g}^*} \circ w^G \circ \Ad_{(g^*)^{-1}} : T_{X_{G^*}} \to T_{X_{G^*}'}
\]
is realized by conjugation by an element $r^* \in G^*$. Therefore, we obtain the following commutative diagram:
\[
\xymatrix{
T_{X_{G^*}'} \ar[rrr]^{\Ad_{\tilde{g}} \circ \vphi^{-1}} &&& T_{X_G'} \\ 
T_{X_{G^*}} \ar[u]^{\Ad_{r^*}} \ar[rrr]^{\Ad_g \circ \vphi^{-1}} &&& T_{X_G}.
}
\]
Passing to Lie algebras, the resulting composition sending $X_G$ to $X_G'$ is:
\[
\Ad_{\tilde{g}} \circ \vphi^{-1} \circ \Ad_{r^*} \circ \left( \Ad_g \circ \vphi^{-1} \right)^{-1}
= \Ad_{\tilde{g}} \circ \Ad_{\vphi^{-1}(r^*)} \circ \Ad_{g^{-1}}
= \Ad_{\tilde{g} \vphi^{-1}(r^*) g^{-1}}.
\]
Thus, $X_G$ and $X_G'$ are stably conjugate, completing the proof of the second statement.

(3) For the third statement, note that whenever a diagram $\FD(X_H', X_G; \BR)$ exists, the element $X_H'$ must be $G^*$-elliptic. In particular, we may assume that both $X_H$ and $X_H'$ lie in the same elliptic Cartan subalgebra $\Ft_H^e$ of $H(\BR)$.

We first show that if there exists a diagram $\FD(X_H', X_G; \BR)$, then there exists $w \in W^G$ such that
\[
\CO_{X_H'} = \CO_{(\CI^{X_G}_{X_H})^{-1}(w \cdot X_G)}.
\]
To see this, consider the diagrams $\FD(X_H, X_G; \BR)$ and $\FD(X_H', X_G; \BR)$, with the following data (for convenience we work with the Lie algebras):
\begin{align*}
\begin{cases}
h \in H, \quad g^* \in G^*_\sc, \quad g \in G_\sc, \\
\xymatrix{
X_H \in \Ft_H^e = \Ft_{X_H} \ar[r]^{\Ad_h}
& \Ft_H \ar[r]^{\eta} & \Ft \ar[r]^{\Ad_{g^*}} &
\Ft_{X_{G^*}} \ar[rrr]^{\Ad_g \circ \varphi^{-1} = \CI^{X_G}_{X_{G^*}}} &&& 
\Ft_{X_G} = \Ft^e \ni X_G
}
\end{cases}
\end{align*}
\begin{align*}
\begin{cases}
\tilde{h} \in H, \quad \tilde{g}^* \in G^*_\sc, \quad \tilde{g} \in G_\sc, \\
\xymatrix{
X_H' \in \Ft_H^e = \Ft_{X_H'} \ar[r]^{\Ad_{\tilde{h}}}
& \Ft_H \ar[r]^{\eta} & \Ft \ar[r]^{\Ad_{\tilde{g}^*}} &
\Ft_{X_{G^*}} \ar[rrr]^{\Ad_{\tilde{g}} \circ \varphi^{-1} = \CI^{X_G}_{X_{G^*}}} &&& 
\Ft_{X_G} = \Ft^e \ni X_G
}
\end{cases}
\end{align*}
To show that $\CO_{X_H'} = \CO_{(\CI^{X_G}_{X_H})^{-1}(w \cdot X_G)}$ for some $w \in W^G$, it suffices to establish the following:
\begin{num}
\item\label{num:part(3)}
The element
\[
\CI^{X_G}_{X_H}(X_H') = \Ad_g \circ \varphi^{-1} \circ \Ad_{g^*} \circ \eta \circ \Ad_h(X_H')
\]
is stably conjugate to $X_G$. Equivalently, there exists $w \in W^G$ such that
\[
\Ad_g \circ \varphi^{-1} \circ \Ad_{g^*} \circ \eta \circ \Ad_h(X_H') = w \cdot X_G.
\]
\end{num}
By definition, we have:
\begin{align}\label{eq:part(3):1}
\Ad_g \circ \varphi^{-1} \circ \Ad_{g^*} \circ \eta \circ \Ad_h(X_H') = 
\Ad_g \circ \varphi^{-1} \circ \Ad_{g^*} \circ \eta \circ 
\Ad_{h \tilde{h}^{-1}} \circ \Ad_{\tilde{h}}(X_H').
\end{align}
Since $\Ad_{h \tilde{h}^{-1}}$ normalizes $\Ft_H^e$, it corresponds to an element $w^H \in W^H$. Because $\eta$ is equivariant with respect to the inclusion $W^H \hookrightarrow W^G$, the conjugation $\eta \circ \Ad_{h \tilde{h}^{-1}} \circ \eta^{-1}$ corresponds to an element $w^G \in W^G$. Hence, we may rewrite \eqref{eq:part(3):1} as
\[
\Ad_g \circ \varphi^{-1} \circ \Ad_{g^*} \circ w^G \circ \eta \circ \Ad_{\tilde{h}}(X_H').
\]
Next, using the identity
\[
\Ad_{g^*} \circ w^G = w^G \circ \Ad_{w^G \cdot g^*} = 
w^G \circ \Ad_{(w^G \cdot g^*) (\tilde{g}^*)^{-1}} \circ \Ad_{\tilde{g}^*},
\]
and noting that the composition 
\[
w^G \circ \Ad_{(w^G \cdot g^*) (\tilde{g}^*)^{-1}}
\]
is again represented by an element of $W^G$, which we denote abusively by $w^G$, the entire composition becomes
\[
\Ad_g \circ \varphi^{-1} \circ w^G \circ \Ad_{\tilde{g}^*} \circ \eta \circ \Ad_{\tilde{h}}(X_H').
\]
Arguing as before, we conclude that this expression equals
\[
w \cdot (\Ad_{\tilde{g}} \circ \varphi^{-1} \circ \Ad_{\tilde{g}^*} \circ \eta \circ \Ad_{\tilde{h}})(X_H') = w \cdot X_G
\]
for some $w \in W^G$, completing the proof of \eqref{num:part(3)}.

To complete the proof of Part (3), it remains to show the following:
\begin{num}
\item\label{num:part(3):2}
For $w \in W^G$, we have
\[
\CO_{(\CI^{X_G}_{X_H})^{-1}(w \cdot X_G)} = \CO_{X_H}
\quad \text{if and only if} \quad
w \in W^H(\Ft_H^e; \BR).
\]
\end{num}
By definition, 
\[
\CO_{(\CI^{X_G}_{X_H})^{-1}(w \cdot X_G)} = \CO_{X_H}
\]
if and only if $(\CI^{X_G}_{X_H})^{-1}(w \cdot X_G)$ is $H(\BR)$-conjugate to $X_H$. Equivalently, this occurs if and only if they are conjugate under the real Weyl group $W^H(\Ft_H^e; \BR)$ of $H(\BR)$.

Since the isomorphism $\CI^{X_G}_{X_H} : \Ft_H^e \to \Ft^e$ is equivariant with respect to Weyl group actions (via the equivariance of $\eta$), we conclude that 
$(\CI^{X_G}_{X_H})^{-1}(w \cdot X_G)$ is $W^H(\Ft_H^e; \BR)$-conjugate to $X_H$ if and only if there exists $w_0 \in W^H(\Ft_H^e; \BR)$ such that
\begin{equation}\label{eq:part(3):last}
w_0 \cdot w \cdot X_G = X_G.
\end{equation}
Since we may view $W^H(\Ft_H^e; \BR) \subset W^H \subset W^G$, and $X_G$ is regular semi-simple, the stabilizer of $X_G$ in $W^G$ is trivial. Thus, \eqref{eq:part(3):last} holds if and only if $w \in W^H(\Ft_H^e; \BR)$. This completes the proof.
\end{proof}

With the aid of Theorem~\ref{thm:rossmann_formula} and Lemma~\ref{lem:counting}, we obtain the following explicit formulae for $\RD_{G,H}(X_H, X_G)$ and $\wt{\RD}_{G,H}(X_H, X_G)$.

\begin{lem}\label{lem:explicit_formulae}
Under \eqref{num:reduce_to_elliptic}, when $X_H$ is $G^*$-elliptic and $X_G$ is elliptic in $\Fg_\reg(\BR)$, 
\begin{align*}
\RD_{G,H}(X_H,X_G) =& 
\gam_\psi(\Fg) (-i)^{\frac{\dim\Fg/\Ft^e}{2}}(-1)^{\frac{\dim\Fg/\Fk}{2}}
\frac{\RD^G(X_G)^{1/2}}{
\pi_{\Sig^+(\Ft^e)}(X_G)}
\\ 
&
\bigg(
\sum_{w\in W^G}
\Del(X_H,w\cdot X^H_G) 
\frac{\RD^G(w\cdot X^H_G)^{1/2}}{
\pi_{\Sig^+(\Ft^e)}(w\cdot X^H_G)
}
e^{i\langle w\cdot X^H_G,X_G\rangle_\Fg}
\bigg).
\\ 
\wt{\RD}_{G,H}(X_H,X_G) = &
\gam_\psi(\Fh)
(-i)^{\frac{\dim \Fh/\Ft_H^e}{2}}(-1)^{\frac{\dim \Fh/\Fk_\Fh}{2}}
\frac{\RD^H(X_H)^{1/2}}{
\pi_{\Sig^+_H(\Ft_H^e)}(X_H)
}     
\bigg(
\sum_{
\substack{ 
w \in W^G
}}
\Del\big((\CI^e)^{-1}(w\cdot X_G),X_G\big)
          \nonumber
\\ 
&
\frac{\RD^H\big((\CI^e)^{-1}(w\cdot X_G)\big)^{1/2}}{
\pi_{\Sig^+_H(\Ft_H^e)}\big((\CI^e)^{-1}(w\cdot X_G) \big)}
e^{i\langle X_H,(\CI^e)^{-1}(w\cdot X_G)\rangle_\Fh}
\bigg).
\end{align*}
Here $X^H_G \in \Fg_\reg(\BR)\cap \Ft^e(\BR)$ is fixed such that $\Del(X_H,X^H_G)\neq 0$, i.e., there exists a diagram $\FD(X_H,X^H_G;\BR)$. For abbreviation we write $\CI^e = \CI^{X^H_G}_{X_H}:\Ft_{X_H}\simeq \Ft^e_H\simeq \Ft^e\simeq \Ft_{X^H_G}$. In particular $\CI^e(X_H) = X^H_G$.
\end{lem}
\begin{proof}
We first compute \( \RD_{G,H}(X_H, X_G) \). By definition,
\begin{align*}
\RD_{G,H}(X_H, X_G) = 
\gamma_\psi(\Fg)
\sum_{\CO_{X_G'} \in \Gam(\Fg_\reg(\BR))}
\Delta(X_H, X_G')
\widehat{i}^G(X_G', X_G).
\end{align*}
Since \( X_H \) is \( G^* \)-elliptic, the term \( \Delta(X_H, X_G') \) is nonzero only if there exists a diagram \( \FD(X_H, X_G'; \BR) \), which in particular implies that \( X_G' \) is elliptic in \( \Fg_\reg(\BR) \). Moreover, once we fix \( X_G^H \in \Fg_\reg(\BR)\cap \Ft^e(\BR) \) such that \( \Delta(X_H, X_G^H) \neq 0 \), Part (1) of Lemma~\ref{lem:counting} implies:
\begin{align*}
\RD_{G,H}(X_H, X_G) 
&= \gamma_\psi(\Fg) \sum_{w \in W^G / W^G(\Ft^e; \BR)}
\Delta(X_H, w \cdot X_G^H) 
\widehat{i}^G(w \cdot X_G^H, X_G) \\
&= \frac{\gamma_\psi(\Fg)}{|W^G(\Ft^e; \BR)|}
\sum_{w \in W^G}
\Delta(X_H, w \cdot X_G^H) 
\widehat{i}^G(w \cdot X_G^H, X_G).
\end{align*}

Applying Theorem~\ref{thm:rossmann_formula}, we obtain:
\begin{align*}
\RD_{G,H}(X_H, X_G) ={}&
\frac{\gamma_\psi(\Fg)}{|W^G(\Ft^e; \BR)|}
\bigg(
\sum_{w \in W^G}
\Delta(X_H, w \cdot X_G^H)
(-i)^{\frac{\dim \Fg / \Ft^e}{2}}
(-1)^{\frac{\dim \Fg / \Fk}{2}} \\
&
\frac{\RD^G(w \cdot X_G^H)^{1/2}}{
\pi_{\Sigma^+(\Ft^e)}(w \cdot X_G^H)}
\frac{\RD^G(X_G)^{1/2}}{
\pi_{\Sigma^+(\Ft^e)}(X_G)}
\Big(
\sum_{w' \in W^G(\Ft^e; \BR)} \epsilon_G(w') \cdot
e^{i \langle w' w \cdot X_G^H, X_G \rangle_\Fg}
\Big)\bigg).
\end{align*}

Now, change variables via \( w \mapsto (w')^{-1} w \), and use the identity:
\[
\pi_{\Sigma^+(\Ft^e)}((w')^{-1} w \cdot X_G^H) = \epsilon(w')\pi_{\Sigma^+(\Ft^e)}(w \cdot X_G^H),
\]
to obtain:
\begin{align*}
\RD_{G,H}(X_H, X_G) ={}&
\gamma_\psi(\Fg) 
(-i)^{\frac{\dim \Fg / \Ft^e}{2}} 
(-1)^{\frac{\dim \Fg / \Fk}{2}} 
\frac{\RD^G(X_G)^{1/2}}{\pi_{\Sigma^+(\Ft^e)}(X_G)} \\
&
\bigg(
\sum_{w \in W^G}
\Delta(X_H, w \cdot X_G^H) 
\frac{\RD^G(w \cdot X_G^H)^{1/2}}{\pi_{\Sigma^+(\Ft^e)}(w \cdot X_G^H)}
e^{i \langle w \cdot X_G^H, X_G \rangle_\Fg}
\bigg)
.
\end{align*}

We now compute \( \wt{\RD}_{G,H}(X_H, X_G) \). By definition,
\[
\wt{\RD}_{G,H}(X_H, X_G) = 
\gamma_\psi(\Fh) 
\sum_{
\substack{
\CO_{X_H'} \in \Fs_H^{-1}(\CO_{X_H}^\st) \\
\CO_{X_H''} \in \Gam(\Fh_{\Gstarreg}(\BR))
}
}
w(X_H'')^{-1} 
\Delta(X_H'', X_G) 
\widehat{i}^H(X_H', X_H'').
\]
We make the following observations:
\begin{itemize}
    \item Since \( X_G \) is elliptic, the term \( \Delta(X_H'', X_G) \) is nonzero only when \( X_H'' \) is \( G^* \)-elliptic, and hence elliptic. By Part (1) of Lemma~\ref{lem:counting},
    \[
    w(X_H'') = |W^H / W^H(\Ft_H^e; \BR)|.
    \]
    
    \item Without loss of generality, assume \( X_H \in \Ft_H^e(\BR) \cap \Fh_{\Gstarreg}(\BR) \) and \( \Delta(X_H, X_G) \neq 0 \). Then by Part (3) of Lemma~\ref{lem:counting}, up to \( H(\BR) \)-conjugation, the set of \( X_H' \in \Fh_{\Gstarreg}(\BR) \) such that \( \Delta(X_H', X_G) \neq 0 \) is given by
    \[
    \left\{
    X_H' \in \Fh_{\Gstarreg}(\BR) \mid 
    \Delta(X_H', X_G) \neq 0
    \right\} / H(\BR) = 
    \bigsqcup_{w \in W^G / W^H(\Ft_H^e; \BR)}
    \CO_{(\CI^e)^{-1}(w \cdot X_G)}.
    \]
    
    \item By Part (1) of Lemma~\ref{lem:counting},
    \[
    \Fs_H^{-1}(\CO_{X_H}^\st) = 
    \bigsqcup_{w \in W^H / W^H(\Ft_H^e; \BR)}
    \CO_{w \cdot X_H}.
    \]
\end{itemize}
Hence,
\begin{align*}
\wt{\RD}_{G,H}(X_H, X_G) = 
\gamma_\psi(\Fh)
\sum_{
\substack{
w' \in W^H \\
w \in W^G
}
}
\frac{
\Delta\big((\CI^e)^{-1}(w \cdot X_G), X_G\big)
\widehat{i}^H\big(w' \cdot X_H, (\CI^e)^{-1}(w \cdot X_G)\big)
}{
|W^H(\Ft_H^e; \BR)|  |W^H|
}.
\end{align*}
Applying Theorem~\ref{thm:rossmann_formula}, we obtain:
\begin{align*}
\wt{\RD}_{G,H}(X_H, X_G) ={}&
\gamma_\psi(\Fh)
\frac{(-i)^{\frac{\dim \Fh / \Ft_H^e}{2}} (-1)^{\frac{\dim \Fh / \Fk_\Fh}{2}}}{|W^H(\Ft_H^e; \BR)| |W^H|}
\bigg(
\sum_{
\substack{
w' \in W^H \\
w \in W^G \\
w'' \in W^H(\Ft_H^e; \BR)
}
}
\Delta\big((\CI^e)^{-1}(w \cdot X_G), X_G\big)
\\
&
\frac{\RD^H(w' \cdot X_H)^{1/2}}{\pi_{\Sigma_H^+(\Ft_H^e)}(w' \cdot X_H)}
\frac{\RD^H((\CI^e)^{-1}(w \cdot X_G))^{1/2}}{\pi_{\Sigma_H^+(\Ft_H^e)}((\CI^e)^{-1}(w \cdot X_G))}
\epsilon_H(w'')
e^{i \langle w'' w' \cdot X_H, (\CI^e)^{-1}(w \cdot X_G) \rangle_\Fh}
\bigg)
.
\end{align*}
By definition, 
$$
\frac{\RD^H(w^\p\cdot X_H)^{1/2}}{
\pi_{\Sig^+_H(\Ft_H^e)}(w^\p\cdot X_H)
}=
\eps_H(w^{\p\p})
\frac{\RD^H(w^{\p\p}w^\p\cdot X_H)^{1/2}}{
\pi_{\Sig^+_H(\Ft_H^e)}
(w^{\p\p}w^\p\cdot X_H)
}.
$$
Hence, after changing variable $w^\p\mapsto (w^{\p\p})^{-1}w^\p$, we get 
\begin{align*}
\wt{\RD}_{G,H}(X_H,X_G) = &
\gam_\psi(\Fh)
\frac{(-i)^{\frac{\dim \Fh/\Ft_H^e}{2}}(-1)^{\frac{\dim \Fh/\Fk_\Fh}{2}}}{|W^H|} 
\bigg(\sum_{
\substack{
w^\p\in W^H \\ 
w\in W^G
}
}
\Del\big((\CI^e)^{-1}(w\cdot X_G),X_G\big)
\\ 
&
\frac{\RD^H(w^\p \cdot X_H)^{1/2}}{\pi_{\Sig^+_H(\Ft_H^e)}(w^\p \cdot X_H)}
\frac{\RD^H\big((\CI^e)^{-1}(w\cdot X_G)\big)^{1/2}}{\pi_{\Sig^+_H(\Ft_H^e)}\big((\CI^e)^{-1}(w\cdot X_G)\big)} 
e^{i\langle w^\p\cdot X_H,(\CI^e)^{-1}(w\cdot X_G)\rangle_\Fh}
\bigg)
.
\end{align*}
Changing variable $w\mapsto w^{\p}w$, using the fact that $\CI^e$ is equivariant with respect to $W^H\hookrightarrow W^G$, we get
$$
\Del\big((\CI^e)^{-1}(w^\p w\cdot X_G),X_G\big)
=\Del\big(w^{\p}\cdot (\CI^e)^{-1}(w\cdot X_G),X_G\big)
=
\Del\big((\CI^e)^{-1}(w\cdot X_G),X_G\big).
$$
Similarly, since the bilinear pairing $\langle \cdot,\cdot \rangle_\Fh$ is conjugation invariant, we get
$$
\langle w^\p\cdot X_H,(\CI^e)^{-1}(w^{\p}w\cdot X_G)\rangle_\Fh
=
\langle w^\p\cdot X_H,w^{\p}\cdot (\CI^e)^{-1}(w\cdot X_G)\rangle_\Fh
=
\langle X_H,(\CI^e)^{-1}(w\cdot X_G)\rangle_\Fh.
$$
Hence we get
\begin{align}\label{eq:eq:elliptic:DtildGH}
\wt{\RD}_{G,H}(X_H,X_G) = &
\gam_\psi(\Fh)
(-i)^{\frac{\dim \Fh/\Ft_H^e}{2}}(-1)^{\frac{\dim \Fh/\Fk_\Fh}{2}}
\frac{\RD^H(X_H)^{1/2}}{
\pi_{\Sig^+_H(\Ft_H^e)}(X_H)
}     
\bigg(
\sum_{
\substack{ 
w \in W^G
}}
\Del\big((\CI^e)^{-1}(w\cdot X_G),X_G\big)
          \nonumber
\\ 
&
\frac{\RD^H\big((\CI^e)^{-1}(w\cdot X_G)\big)^{1/2}}{
\pi_{\Sig^+_H(\Ft_H^e)}\big((\CI^e)^{-1}(w\cdot X_G) \big)}
e^{i\langle X_H,(\CI^e)^{-1}(w\cdot X_G)\rangle_\Fh}
\bigg).
\end{align}
It follows that we finish the proof of the lemma.
\end{proof}

Now we are ready to prove the proposition. Based on the lemma we just proved, we have shown the following fact:

\begin{num}
\item\label{num:explicit:DandtildeD}

With the notation as above we have the following explicit formulae:
\begin{align*}
\RD_{G,H}(X_H,X_G) =& \gam_\psi(\Fg)
(-i)^{\frac{\dim \Fg/\Ft^e}{2}}
(-1)^{\frac{\dim \Fg/\Fk}{2}}
\frac{\RD^G(X_G)^{1/2}}{\pi_{\Sig^+(\Ft^e)}(X_G)}   \nonumber
\\ 
&
\bigg(
\sum_{w\in W^G}
\Del(X_H,w\cdot X^H_G)
\frac{\RD^G(w\cdot X^H_G)^{1/2}}{\pi_{\Sig^+(\Ft^e)}(w\cdot X^H_{G})}
e^{i\langle w\cdot X^H_G,X_G\rangle_\Fg}
\bigg)
\\ 
\wt{\RD}_{G,H}(X_H,X_G) = &
\gam_\psi(\Fh)
(-i)^{\frac{\dim \Fh/\Ft_H^e}{2}}(-1)^{\frac{\dim \Fh/\Fk_\Fh}{2}}
\frac{\RD^H(X_H)^{1/2}}{
\pi_{\Sig^+_H(\Ft_H^e)}(X_H)
}     
\bigg(
\sum_{
\substack{ 
w \in W^G
}}
\Del\big((\CI^e)^{-1}(w\cdot X_G),X_G\big)
          \nonumber
\\ 
&
\frac{\RD^H\big((\CI^e)^{-1}(w\cdot X_G)\big)^{1/2}}{
\pi_{\Sig^+_H(\Ft_H^e)}\big((\CI^e)^{-1}(w\cdot X_G) \big)}
e^{i\langle X_H,(\CI^e)^{-1}(w\cdot X_G)\rangle_\Fh}
\bigg).
\end{align*}
\end{num}

After fixing a Killing form on $\Fg$ and $\Fh$, $\Fg/\Fk_G$ and $\Fh/\Fk_H$ are positive definite, while $\Fk_G$ and $\Fk_H$ are negative definite. Hence from \cite[\S 1]{jlgl2}, we have 
\begin{align*}
\gam_\psi(\Fg)/\gam_\psi(\Fk_G) &= i^{\frac{\dim\Fg/\Fk_G}{2}}, \quad 
\gam_\psi(\Fk_G) = (-i)^{\frac{\dim\Fk_G}{2}},
\\ 
\gam_\psi(\Fh)/\gam_\psi(\Fk_H) &= i^{\frac{\dim\Fh/\Fk_H}{2}}, \quad 
\gam_\psi(\Fk_H) = (-i)^{\frac{\dim\Fk_H}{2}}.
\end{align*}
It follows that 
\begin{align*}
\gam_\psi(\Fg)
(-i)^{\frac{\dim \Fg/\Ft^e}{2}}
(-1)^{\frac{\dim \Fg/\Fk}{2}}
=
\gam_\psi(\Fh)
(-i)^{\frac{\dim \Fh/\Ft_H^e}{2}}(-1)^{\frac{\dim \Fh/\Fk_\Fh}{2}}.
\end{align*}
Hence we are reduced to show that 
\begin{align}\label{eq:pro:equality:last_id}
&\frac{\RD^G(X_G)^{1/2}}{\pi_{\Sig^+(\Ft^e)}(X_G)}
\bigg(
\sum_{w\in W^G}
\Del(X_H,w\cdot X^H_G)
\frac{\RD^G(w\cdot X^H_G)^{1/2}}{\pi_{\Sig^+(\Ft^e)}(w\cdot X^H_{G})}
e^{i\langle w\cdot X^H_G,X_G\rangle_\Fg}
\bigg)                  \nonumber
\\ 
=&
\frac{\RD^H(X_H)^{1/2}}{
\pi_{\Sig^+_H(\Ft_H^e)}(X_H)
}     
\bigg(
\sum_{
\substack{ 
w \in W^G
}}
\Del\big((\CI^e)^{-1}(w\cdot X_G),X_G\big)
\frac{\RD^H\big((\CI^e)^{-1}(w\cdot X_G)\big)^{1/2}}{
\pi_{\Sig^+_H(\Ft_H^e)}\big((\CI^e)^{-1}(w\cdot X_G) \big)}
e^{i\langle X_H,(\CI^e)^{-1}(w\cdot X_G)\rangle_\Fh}
\bigg).
\end{align}
To prove \eqref{eq:pro:equality:last_id}, it suffices to show the following statement:
\begin{num}
\item\label{num:lastred:DGH=tildDGH} For any $w\in W^G$, 
\begin{align*}
&\frac{\RD^G(X_G)^{1/2}}{\pi_{\Sig_G^+(\Ft^e)}(X_G)}
\Del(X_H,w\cdot X^H_G)
\frac{\RD^G(w\cdot X^H_G)^{1/2}}{\pi_{\Sig_G^+(\Ft^e)}(w\cdot X^H_{G})}
e^{i\langle w\cdot X^H_G,X_G\rangle_\Fg}
\\ 
=&
\frac{\RD^H(X_H)^{1/2}}{
\pi_{\Sig^+_H(\Ft_H^e)}(X_H)
}     
\Del\big((\CI^e)^{-1}(w^{-1}\cdot X_G),X_G\big)
\frac{\RD^H\big((\CI^e)^{-1}(w^{-1}\cdot X_G)\big)^{1/2}}{
\pi_{\Sig^+_H(\Ft_H^e)}\big((\CI^e)^{-1}(w^{-1}\cdot X_G) \big)}
e^{i\langle X_H,(\CI^e)^{-1}(w^{-1}\cdot X_G)\rangle_\Fh}.
\end{align*}
\end{num}
Notice that both $(X_H,w\cdot X^H_G)$ and $\big(
(\CI^e)^{-1}(w^{-1}\cdot X_G)
,X_G\big)$ in $\Fh_{\Gstarreg}(\BR)\times \Fg_\reg(\BR)$ come from the following composition of isomorphisms
$$
\xymatrix{
X_H,(\CI^{e})^{-1}(w^{-1}\cdot X_G)\in \Ft^e_H=\Ft_{X_H} \ar[rr]^{\CI^e}_\simeq && \Ft_{X_G}= \Ft^e \ni X^H_G,w^{-1}\cdot X_G
\ar[d]_{\Ad_w} \\ 
&&
\Ft^e\ni
w\cdot X^H_G,X_G
}.
$$
As mentioned in Remark \ref{rmk:Del_I_dependence} and Remark \ref{rmk:Del_III_dependence}, by the definition of the transfer factor $\Del = \Del_{\RI}\cdot \Del_{\RI\RI}\cdot \Del_{\RI\RI\RI}$, both $\Del_\RI$ and $\Del_{\RI\RI\RI}$ depend only on the composition of the isomorphism $\Ad_w\circ \CI^e$ rather than the particular elements inside. Hence 
$$
\Del_{\RI}(X_H,w\cdot X^H_G)=
\Del_{\RI}\big(
(\CI^e)^{-1}(w^{-1}\cdot X_G),X_G
\big)\quad 
\Del_{\RI\RI\RI}(X_H,w\cdot X^H_G)=
\Del_{\RI\RI\RI}\big(
(\CI^e)^{-1}(w^{-1}\cdot X_G),X_G
\big)
$$
Moreover, by the definition of the Killing form on $\Fh$ from \cite[VIII.~6]{MR1344131}, we have
$$
\langle w\cdot X^H_G,X_G\rangle_\Fg = 
\langle w\cdot \CI^e(X_H),X_G\rangle_\Fg
=
\langle \CI^e(X_H),w^{-1}\cdot X_G\rangle_\Fg
=
\langle X_H,
(\CI^e)^{-1}(
w^{-1}\cdot X_G)\rangle_\Fh.
$$
Hence in order to establish \eqref{num:lastred:DGH=tildDGH}, it suffices to show the following identity
\begin{align}\label{eq:last_Del_II:1}
&\frac{\RD^G(X_G)^{1/2}}{\pi_{\Sig_G^+(\Ft^e)}(X_G)}
\Del_{\RI\RI}(X_H,w\cdot X^H_G)
\frac{\RD^G(w\cdot X^H_G)^{1/2}}{\pi_{\Sig_G^+(\Ft^e)}(w\cdot X^H_{G})} 
\\ 
=&
\frac{\RD^H(X_H)^{1/2}}{
\pi_{\Sig^+_H(\Ft_H^e)}(X_H)
}     
\Del_{\RI\RI}\big((\CI^e)^{-1}(w^{-1}\cdot X_G),X_G\big)
\frac{\RD^H\big((\CI^e)^{-1}(w^{-1}\cdot X_G)\big)^{1/2}}{
\pi_{\Sig^+_H(\Ft_H^e)}\big((\CI^e)^{-1}(w^{-1}\cdot X_G) \big)}.
\nonumber
\end{align}
But it follows from the definition of $\Del_{\RI\RI}$. Precisely, from Definition \ref{eq:defin:Del_II},
$$
\Del_{\RI\RI}(X_H,w\cdot X^H_G) = 
\prod 
\chi_\alp\bigg(
\frac{\alp(w\cdot X^H_G)}{a_\alp}
\bigg),\quad 
\Del_{\RI\RI}\big( (\CI^e)^{-1}(w^{-1}\cdot X_G),X_G\big) = 
\prod \chi_\alp
\bigg(
\frac{\alp(X_G)}{a_\alp}
\bigg),
$$
where the product is taken over a set of representatives of $\Gam$-orbits in $\Sig^{\sym}_{\text{hors }H}(T_{X_{G^*}}) = \Sig_G(\Ft^e)\bs \Sig_H(\Ft_H^e)$. Here $\Sig_H(\Ft^e_H)$ is identified as a subset of $\Sig_G(\Ft^e)$ via the isomorphism $\Ad_w\circ \CI^e:\Ft^e_H\simeq \Ft^e$. Since $\Ft^e$ is a fundamental Cartan subalgebra, we deduce that any $\alp\in \Sig_G(\Ft^e)$ are imaginary. Hence a set of representatives of $\Gam$-orbits in $\Sig_G(\Ft^e)\bs \Sig_H(\Ft_H^e)$ can be taken as $\Sig^+_G(\Ft^e)\bs \Sig^+_H(\Ft_H^e)$. Also since the ground field is the field of real numbers, $\chi_\alp = \sgn$ is the sign character. Hence 
\begin{equation}\label{eq:last_Del_II:2}
\frac{\Del_{\RI\RI}(X_H,w\cdot X^H_G)}{
\Del_{\RI\RI}\big((\CI^e)^{-1}(w^{-1}\cdot X_G),X_G\big)
}=
\prod_{\alp\in \Sig^+_G(\Ft^e)\bs \Sig^+_H(\Ft_H^e)}
\sgn
\bigg(
\frac{\alp(w\cdot X^H_G)}{\alp(X_G)}
\bigg).
\end{equation}
Finally, by the definition of $\Del_{\RI\RI}$, since 
$$
\Ad_w \circ \CI^e(X_H) = w\cdot X^H_G,\quad 
\Ad_w\circ \CI^e
\big(
(\CI^e)^{-1}(w^{-1}\cdot X_G)
\big) = X_G,
$$
we get that for any $\alp\in \Sig^+_{H}(\Ft^e_H)\hookrightarrow \Sig^+_G(\Ft^e)$
\begin{align}\label{eq:last_Del_II:3}
\alp(w\cdot X^H_G) = 
\alp
\big(
\Ad_w\circ \CI^e(X_H)
\big)
=
 \alp(X_H)
\quad 
\alp\big(
(\CI^e)^{-1}(w^{-1}\cdot X_G)\big) = 
\alp(X_G).
\end{align}
By combining \eqref{eq:last_Del_II:2} and \eqref{eq:last_Del_II:3}, we obtain the identity \eqref{eq:last_Del_II:1}, thereby completing the proof of the proposition.
\end{proof}

\subsection{Harish-Chandra uniqueness theorem}\label{subsec:harish_chandra_s_uniqueness_theorem}

In this subsection, we show that under the reduction \eqref{num:reduce_to_elliptic}, Proposition~\ref{pro:Waldspurger-Id:elliptic} suffices to deduce Theorem~\ref{thm:Waldspurger-Id} for any \( X_G' \in \Fg_\reg(\BR) \), with the aid of the Harish-Chandra uniqueness theorem, i.e., \( \RI(\Fg) \)-finite, invariant, tempered distributions on \( \Fg(\BR) \) of \textbf{elliptic type} are uniquely determined by their values on the elliptic locus. By showing that both
\[
\frac{\RD_{G,H}(X_H, \cdot)}{\RD^G(\cdot)^{1/2}} \quad \text{and} \quad \frac{\wt{\RD}_{G,H}(X_H, \cdot)}{\RD^G(\cdot)^{1/2}}
\]
satisfy the hypotheses of the Harish-Chandra uniqueness theorem, we reduce the proof of Theorem~\ref{thm:Waldspurger-Id} to Proposition~\ref{pro:Waldspurger-Id:elliptic}.

We begin by recalling the statement of the Harish-Chandra uniqueness theorem. Let \( G \) be a reductive algebraic group over \( \BR \) with Lie algebra \( \Fg \). Recall that \( \RI(\Fg) \) is the algebra of constant coefficient differential operators on \( \Fg(\BC) \) which acts naturally on the space of tempered distributions on \( \Fg(\BR) \). A tempered distribution on \( \Fg(\BR) \) is said to be \emph{invariant} if it is invariant under the conjugation action of \( G(\BR) \), and it is called an \emph{eigendistribution} if it is an eigenvector for the action of \( \RI(\Fg) \), with eigencharacter given by a \( G(\BR) \)-invariant polynomial.

The following lemma is due to Harish-Chandra (\cite[Part~I, §7.1]{MR0473111}).

\begin{lem}\label{lem:hatJeigen}
For any \( X \in \Fg_\reg(\BR) \), the tempered distribution \( \widehat{j}^G(X, \cdot) \), which is the Fourier transform of \( \RJ_G(X, \cdot) \), is a \( G(\BR) \)-invariant tempered eigendistribution. That is, there exists a homomorphism
\[
\chi^G_{[X]} : \RI(\Fg) \to \BC
\]
such that for all \( z \in \RI(\Fg) \),
\[
\partial(z)  \widehat{j}^G(X, \cdot) = 
\chi^G_{[X]}(z) \widehat{j}^G(X, \cdot).
\]
\end{lem}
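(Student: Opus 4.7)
My plan is to deduce both claims from the Fourier duality between differentiation and multiplication by polynomials, together with the $\Ad$-invariance of $\langle\cdot,\cdot\rangle_\Fg$ and the elementary fact that the orbital integral distribution is supported on a single conjugacy class.

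For the $G(\BR)$-invariance, I would first observe that $\RJ_G(X,\cdot)$ itself is $G(\BR)$-invariant as a distribution in its test-function argument: for $h\in G(\BR)$, the change of variable $g\mapsto gh^{-1}$ in \eqref{eq:1:intro} gives $\RJ_G(X,f^h)=\RJ_G(X,f)$ with $f^h(Y)=f(h^{-1}Yh)$. Because both the bilinear form $\langle\cdot,\cdot\rangle_\Fg$ and the self-dual Haar measure $\ud_\Fg Y$ are $\Ad$-invariant, the Fourier transform $\CF_{\psi_\Fg}$ commutes with the $G(\BR)$-conjugation action on tempered distributions, so $\wh{j}^G(X,\cdot)=\CF_{\psi_\Fg}(\RJ_G(X,\cdot))$ inherits $G(\BR)$-invariance.

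For the eigendistribution statement, I would identify $\RI(\Fg)=S(\Fg(\BC))$ with polynomial functions on $\Fg(\BC)$ via $\langle\cdot,\cdot\rangle_\Fg$, and for $z\in\RI(\Fg)$ write $P_z$ for the polynomial corresponding to $\partial(z)$ (absorbing the factor $(2\pi i)^{\deg z}$ dictated by the normalization $\psi(x)=e^{2\pi ix}$). Differentiating the defining integral of $\CF_{\psi_\Fg}$ under the integral sign yields the standard duality
$$
\partial(z)\,\CF_{\psi_\Fg}(T)=\CF_{\psi_\Fg}(P_z\cdot T)
$$
for every tempered distribution $T$ on $\Fg(\BR)$, which one then applies to $T=\RJ_G(X,\cdot)$.

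The decisive observation is that $\RJ_G(X,\cdot)$ is supported on the single conjugacy class $\CO_X$. Hence whenever $z$ lies in the $\Ad$-invariant part of $\RI(\Fg)$, the polynomial $P_z$ is constant on $\CO_X$ with value $P_z(X)$, and the displayed duality collapses to
$$
\partial(z)\,\wh{j}^G(X,\cdot)=P_z(X)\,\wh{j}^G(X,\cdot).
$$
Setting $\chi^G_{[X]}(z):=P_z(X)$ produces the required eigencharacter, which factors through the algebra of $\Ad$-invariant polynomials and therefore depends only on the $G(\BC)$-orbit of $X$, justifying the notation $[X]$. I do not anticipate any substantive obstacle beyond careful bookkeeping of the $2\pi i$ constants in the polynomial-operator correspondence; the genuinely analytic inputs, namely the temperedness and local integrability of $\wh{j}^G(X,\cdot)$, are already encoded in the Harish-Chandra results cited in the paragraph following \eqref{eq:1:intro}.
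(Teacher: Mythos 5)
Your strategy is essentially the classical Harish--Chandra argument that the paper cites rather than reproduces (\cite[Part~I,~\S 7.1]{MR0473111}): Fourier-dualize the $\RI(\Fg)$-action into multiplication by the corresponding invariant polynomials, and exploit that these are constant on $\CO_X$. The invariance argument in your second paragraph and the normalization bookkeeping are fine.

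The one step that needs repair is the ``decisive observation.'' From the bare fact that the distribution $\RJ_G(X,\cdot)$ is supported on $\CO_X$ together with $P_z\big|_{\CO_X}\equiv P_z(X)$, one cannot conclude $P_z\cdot\RJ_G(X,\cdot)=P_z(X)\,\RJ_G(X,\cdot)$: for a positive-order distribution supported on a closed set $S$, multiplication by a smooth function vanishing on $S$ need not kill it, e.g.\ $x\,\delta_0'=-\delta_0\neq 0$ even though $x\big|_{\{0\}}=0$. What makes your step correct here is the additional fact that $\RJ_G(X,\cdot)$ is a Radon measure, i.e.\ a distribution of order zero: it is $\RD^G(X)^{1/2}$ times the pushforward to $\Fg(\BR)$ of the invariant measure on $T_X(\BR)\backslash G(\BR)$ under the proper orbit map $g\mapsto g^{-1}Xg$, and the orbit is closed because $X$ is semisimple. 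For an order-zero distribution $T$ with support $S$ and a continuous $P$ with $P\big|_S\equiv c$ one does have $\langle PT,f\rangle=\int_S Pf\,dT=c\int_S f\,dT=c\langle T,f\rangle$. Once you insert this order-zero observation the argument is complete and coincides with the classical one.
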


\begin{rmk}\label{rmk:hatJeigen}
From \cite[Part~I, §7.1]{MR0473111}$)$, after identifying $\RI(\Fg)$ with the space of conjugation invariant polynomials on $\Fg(\BC)$ via a conjugation invariant non-degenerate symmetric bilinear form on $\Fg(\BC)$:
\begin{align*}
\RI(\Fg) &\longleftrightarrow \BC[\Fg(\BC)]
\\ 
z &\longleftrightarrow \h{z}
\end{align*}
we have
$$
\chi^G_{[X]}(z) = \h{z}(iX).
$$
By Hilbert's Nullstellensatz, up to conjugation $\chi^G_{[X]}$ is uniquely determined by $X$.
\end{rmk}

Following \cite[Theorem~13]{MR0473111} and \cite[p.118]{MR0473111}, we introduce the following definition.

\begin{defin}\label{defin:elliptic_type_distribution}
A tempered, invariant, \( \RI(\Fg) \)-finite distribution \( \mu \) on \( \Fg(\BR) \) is said to be of \emph{elliptic type} if there exists a finite set of elliptic regular semi-simple elements
\[
\CX = \{X_j\}_{j=1}^{m} \subset \Fg(\BR)_\el
\]
such that
\[
\partial(\mathcal{I}_\CX) \mu = 0,
\quad \text{where} \quad
\mathcal{I}_\CX := \bigcap_{j=1}^m \ker(\chi^G_{[X_j]}).
\]
\end{defin}

In particular, for any \( X \in \Fg(\BR)_\el \), the distribution \( \widehat{j}^G(X, \cdot) \) is of elliptic type.

The following result is due to Harish-Chandra (\cite[Theorem~13, p.~116]{MR0473111}).

\begin{thm}\label{thm:elliptic_unique}
Suppose that \( \Fg(\BR) \) admits an elliptic Cartan subalgebra \( \Ft^e(\BR) \). Let \( \mu \) be a tempered, invariant, \( \RI(\Fg) \)-finite distribution of elliptic type, and let \( \RF_\mu \) denote the analytic function on \( \Fg_\reg(\BR) \) associated to \( \mu \). Then
\[
\RF_\mu|_{\Ft^e(\BR)\cap \Fg_\reg(\BR)} = 0
\quad \Longrightarrow \quad
\mu = 0.
\]
In particular, any finite linear combination of tempered, invariant eigendistributions of elliptic type is uniquely determined by its restriction to the elliptic locus.
\end{thm}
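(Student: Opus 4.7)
The plan is to show that $\RF_\mu$ vanishes on $\Ft(\BR)\cap \Fg_\reg(\BR)$ for every Cartan subalgebra $\Ft(\BR)\subset \Fg(\BR)$; by local integrability of $\RF_\mu$ and the Weyl integration formula, this will force $\mu=0$ as a tempered distribution. By linearity I may further decompose $\mu$ into generalized $\RI(\Fg)$-eigendistributions and reduce to the case where $\mu$ is a generalized eigendistribution with eigencharacter $\chi^G_{[X_j]}$ for some $X_j\in \CX\subset \Fg(\BR)_\el$.

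The next step is to use the radial part of $\RI(\Fg)$ on each Cartan: the product $\pi_{\Sig^+(\Ft)}(Y)\cdot \RF_\mu(Y)$ satisfies a finite system of constant coefficient differential equations on $\Ft(\BR)\cap \Fg_\reg(\BR)$ whose solution space, on each chamber, is spanned by the exponentials $\{e^{i\langle w\cdot X_j,\,Y\rangle_\Fg}:w\in W^G\}$. On the elliptic Cartan $\Ft^e(\BR)$, the vanishing hypothesis together with linear independence of these exponentials immediately forces all chamber-wise coefficients to be zero, settling the base case.

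To extend vanishing to a general Cartan $\Ft(\BR)$, I would use the structure theorem that any Cartan subalgebra of $\Fg(\BR)$ is connected to $\Ft^e(\BR)$ by a finite chain of Cayley transforms at noncompact imaginary roots, organizing the family of Cartans into a poset rooted at $\Ft^e$. Ascending one Cayley step at a time, the Harish-Chandra jump conditions (the invariant-distribution analogue of $I_3(\Fg)$ from Subsection~\ref{subsec:OrbSt}) express the jumps of $\pi_{\Sig^+(\Ft)}\RF_\mu|_\Ft$ across the corresponding singular hyperplanes in terms of the restriction of $\RF_\mu$ to the more elliptic Cartan $\Ft'$, so the inductive hypothesis $\RF_\mu|_{\Ft'}=0$ forces these jumps to vanish. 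The \emph{main obstacle} is that vanishing of jumps across specific hyperplanes does not by itself kill the chamber-wise exponential coefficients; one must additionally combine the jump conditions with temperedness (ruling out unbounded exponential growth in the non-compact directions of $\Ft(\BR)$) and with $W^G(\Ft;\BR)$-equivariance to simultaneously eliminate every exponential component. This is the technical heart of Harish-Chandra's original argument in \cite[Part~I, §7]{MR0473111}, which we plan to invoke directly once the explicit expansion and base case on $\Ft^e(\BR)$ have been set up.
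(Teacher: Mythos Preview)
The paper does not prove this theorem at all: it is stated as a known result of Harish-Chandra and simply cited as \cite[Theorem~13, p.~116]{MR0473111}, with no argument given. Your proposal is an expository sketch of the classical proof that ultimately defers to the same reference for the ``technical heart,'' so in effect both you and the paper are invoking Harish-Chandra's theorem rather than supplying an independent proof; the only difference is that you spell out the standard ingredients (radial parts, exponential solutions on each chamber, Cayley-transform induction, jump conditions, temperedness) before pointing to \cite{MR0473111}.
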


To complete the proof of Theorem~\ref{thm:Waldspurger-Id} under the reduction 
\eqref{num:reduce_to_elliptic}, it remains, using 
Theorem~\ref{thm:elliptic_unique} together with 
Proposition~\ref{pro:Waldspurger-Id:elliptic}, to establish the following proposition.

\begin{pro}\label{pro:D&tildD_are_both_elliptic_type}
Under the assumption \eqref{num:reduce_to_elliptic}, if \( X_H \) is \( G^* \)-elliptic, then both
\[
\frac{\RD_{G,H}(X_H, \cdot)}{\RD^G(\cdot)^{1/2}}
\quad \text{and} \quad
\frac{\wt{\RD}_{G,H}(X_H, \cdot)}{\RD^G(\cdot)^{1/2}}
\]
are tempered, invariant, \( \RI(\Fg) \)-finite distributions of elliptic type.
\end{pro}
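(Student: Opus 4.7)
The plan is to handle the two distributions $\mu := \RD_{G,H}(X_H,\cdot)/\RD^G(\cdot)^{1/2}$ and $\wt{\mu} := \wt{\RD}_{G,H}(X_H,\cdot)/\RD^G(\cdot)^{1/2}$ separately. The case of $\mu$ is a direct unpacking of the definition, while $\wt{\mu}$ is interpreted as the dual endoscopic transfer to $\Fg$ of an $\RI(\Fh)$-eigendistribution on $\Fh$.

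For $\mu$, using $\wh{i}^G(X_G',X_G) = \RD^G(X_G)^{1/2}\wh{j}^G(X_G',X_G)$ I would rewrite
\[
\mu(X_G) \;=\; \gam_\psi(\Fg)\sum_{\CO_{X_G'}\in \Gam(\Fg_\reg(\BR))}\Del(X_H,X_G')\,\wh{j}^G(X_G',X_G).
\]
Since $X_H$ is $G^*$-elliptic, Lemma~\ref{lem:counting}(2) reduces the sum to the finite collection of elliptic orbits $\{\CO_{w\cdot X_G^H} : w \in W^G/W^G(\Ft^e;\BR)\}$. By Lemma~\ref{lem:hatJeigen}, each $\wh{j}^G(w\cdot X_G^H,\cdot)$ is a tempered invariant $\RI(\Fg)$-eigendistribution with character $\chi^G_{[w\cdot X_G^H]}$, and is of elliptic type (Definition~\ref{defin:elliptic_type_distribution}) because $w\cdot X_G^H\in \Fg(\BR)_\el$. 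A finite sum of tempered invariant eigendistributions of elliptic type is annihilated by the ideal $\mathcal{I}_\CX := \bigcap_w \ker(\chi^G_{[w\cdot X_G^H]})$ with $\CX := \{w\cdot X_G^H\}_w \subset \Fg(\BR)_\el$, so $\mu$ has all four required properties.

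For $\wt{\mu}$, I would first absorb the sum over $X_H'\in \Fs_H^{-1}(\CO^\st_{X_H})$ into the stable Fourier transform
\[
\wh{j}^{H,\st}_{X_H}(Y_H) := \sum_{\CO_{X_H'}\in \Fs_H^{-1}(\CO^\st_{X_H})}\wh{j}^H(X_H',Y_H),
\]
yielding
\[
\wt{\mu}(X_G) \;=\; \gam_\psi(\Fh)\sum_{\CO_{Y_H}}w(Y_H)^{-1}\frac{\Del(Y_H,X_G)\RD^H(Y_H)^{1/2}}{\RD^G(X_G)^{1/2}}\,\wh{j}^{H,\st}_{X_H}(Y_H).
\]
By Lemma~\ref{lem:hatJeigen} combined with Remark~\ref{rmk:hatJeigen} (the character $\chi^H_{[X_H']}$ depends only on the stable class of $X_H'$), $\wh{j}^{H,\st}_{X_H}$ is an invariant tempered $\RI(\Fh)$-eigendistribution on $\Fh$ with character $\chi^H_{[X_H]}$. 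I would then show that $\wt{\mu}$ is an $\RI(\Fg)$-eigendistribution with eigencharacter $\chi^H_{[X_H]}\circ \eta^*$, where $\eta^*:\RI(\Fg)\to \RI(\Fh)$ is the algebra homomorphism induced by $\eta:\BT_H\to \BT$ via the Harish-Chandra isomorphism and the inclusion $W^H\hookrightarrow W^G$ (Remark~\ref{rmk:admissible_embed_Weylequivariant}). On any Cartan $\Ft\subset \Fg$ matched to $\Ft_H\subset \Fh$ through a real diagram isomorphism $\CI^0:\Ft_H\xrightarrow{\sim}\Ft$, the Harish-Chandra radial part of $\partial(z)$ on $G(\BR)$-invariant distributions acts by the constant-coefficient operator corresponding to $z|_\Ft\in S(\Ft)^{W^G}$, which pulls back along $\CI^0$ by the chain rule to $\partial(\eta^*(z))$ on $\Ft_H$ --- independently of the Weyl element parameterizing the matching, because $\h{z}$ is $W^G$-invariant. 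Applying the eigenequation for $\wh{j}^{H,\st}_{X_H}$ then yields $\partial(z)\wt{\mu} = \chi^H_{[X_H]}(\eta^*(z))\wt{\mu}$. Since
\[
\chi^H_{[X_H]}(\eta^*(z)) \;=\; \h{z}(i\eta(X_H)) \;=\; \h{z}(iX_{G^*}^H) \;=\; \chi^G_{[X_G^H]}(z)
\]
and $X_G^H\in \Fg(\BR)_\el$ by the $G^*$-ellipticity of $X_H$ under the reduction~\eqref{num:reduce_to_elliptic}, the single eigencharacter of $\wt{\mu}$ is of elliptic type, whence $\wt{\mu}$ is $\RI(\Fg)$-finite and of elliptic type. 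Invariance under $G(\BR)$-conjugation is inherited from that of $\Del(Y_H,\cdot)$ and $\RD^G$.

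The main obstacle is lifting the Cartan-wise pointwise eigenidentity for $\wt{\mu}$ to a genuine distributional identity on all of $\Fg(\BR)$, together with establishing temperedness. The normalized kernel $\Del(Y_H,X_G)\RD^H(Y_H)^{1/2}/\RD^G(X_G)^{1/2}$ is smooth on the matching locus of each Cartan but has sign jumps along walls, so the distributional $\RI(\Fg)$-action on $\wt{\mu}$ is correctly captured only once these jumps are controlled via the stable jump conditions $I_3^\st(\Fg)$ of Bouaziz's spectral characterization (Theorem~\ref{thm:orb-spectral-stable}); this is the same mechanism underwriting the existence of smooth transfer (Theorem~\ref{thm:smooth-transfer}). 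Temperedness follows once $\wt{\mu}$ is identified on each Cartan with a finite sum of exponentials of the form arising from $\wh{j}^G(Y_k,\cdot)$ for elliptic $Y_k$, where the apparent $1/\pi_{\Sig^+}(X_G)$ singularity cancels in the resulting Weyl-antisymmetric combination, combined with the global boundedness of $\wh{i}^H$ controlling growth at infinity.
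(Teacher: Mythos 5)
Your treatment of $\mu=\RD_{G,H}(X_H,\cdot)/\RD^G(\cdot)^{1/2}$ is correct and matches the paper: once the sum is restricted (via the $G^*$-ellipticity of $X_H$ and Lemma~\ref{lem:counting}) to a finite collection of elliptic orbits, $\mu$ is a finite linear combination of the elliptic-type eigendistributions $\wh{j}^G(w\cdot X_G^H,\cdot)$ of Lemma~\ref{lem:hatJeigen}.

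For $\wt{\mu}$, your conceptual outline — view $\wt{\mu}$ as the endoscopic transfer of a stable $\RI(\Fh)$-eigendistribution and transport the eigencharacter along the comparison map $\eta^*\colon\RI(\Fg)\to\RI(\Fh)$ — correctly predicts the eigencharacter $\chi^G_{[X^H_G]}$. But there is a genuine gap exactly where you flag the "main obstacle," and you do not close it. The chain-rule computation of radial parts you sketch shows, at best, that the analytic function representing $\wt{\mu}$ on $\Fg_\reg(\BR)$ satisfies the eigenvalue equation \emph{pointwise} on each Cartan. The proposition asserts a \emph{distributional} statement on all of $\Fg(\BR)$, and a locally integrable invariant function satisfying a pointwise ODE on the regular set need not be an eigendistribution; in Harish-Chandra's theory this passage is exactly where the whole difficulty lies. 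Your proposed fix — appeal to the jump conditions $I_3^\st(\Fg)$ in Bouaziz's spectral characterization — is not carried out: $I_3^\st$ characterizes \emph{stable orbital integrals} of test functions, and applying it to $\wt{\mu}$ would require first establishing that $\wt{\mu}$ (or its transfer) satisfies the relevant matching and jump relations, which is itself a nontrivial assertion. The paper instead proves the distributional eigenidentity by Harish-Chandra semisimple descent (Proposition~\ref{pro:HC-descent_to_torus}, after \cite[Lemma~3.2.1]{beuzart2020local}): for a $G$-good neighborhood $\omega_{X_G}$ one has $(\partial(z)T)_{X_G,\omega_{X_G}}=\partial(z_{X_G})T_{X_G,\omega_{X_G}}$ as distributions, and under the real torus isomorphism $\CI^{X_G}_{X''_H}$ (equivariant for $W^H\hookrightarrow W^G$) one identifies $z_{X_G}$ with $z_{X''_H}$ and applies Lemma~\ref{lem:hatJeigen} on the $H$-side. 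That descent lemma is the essential ingredient missing from your argument.

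Two smaller points. First, your temperedness argument is both more complicated and less general than the paper's: the paper simply observes that $\wh{i}^H(X'_H,\cdot)=\RD^H(\cdot)^{1/2}\wh{j}^H(X'_H,\cdot)$ is globally bounded while $\RD^G(\cdot)^{-1/2}$ is locally integrable of moderate growth, so the product is already tempered, with no need to cancel poles against Weyl-antisymmetrizations (which would in any case only be visible on the elliptic Cartan). Second, you should verify that $\chi^G_{[X'_G]}$ is independent of the representative $X'_H\in\Fs_H^{-1}(\CO^\st_{X_H})$ (true, since stably conjugate elements lie in the same $G(\BC)$-orbit and $\chi^G_{[\cdot]}$ only sees that orbit by Remark~\ref{rmk:hatJeigen}); you use this implicitly when you conclude $\wt{\mu}$ is a single eigendistribution rather than merely $\RI(\Fg)$-finite.
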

\begin{proof}
By definition,
\[
\frac{\RD_{G,H}(X_H, X_G)}{\RD^G(X_G)^{1/2}} =
\gamma_\psi(\Fg)
\sum_{\CO_{X_G'} \in \Gam(\Fg_\reg(\BR))}
\Delta(X_H, X_G') 
\widehat{j}^G(X_G', X_G).
\]
Since \( X_H \) is \( G^* \)-elliptic, we have \( X_G' \in \Fg(\BR)_\el \) whenever \( \Delta(X_H, X_G') \neq 0 \). It follows that
\[
\frac{\RD_{G,H}(X_H, \cdot)}{\RD^G(\cdot)^{1/2}}
\]
is a finite linear combination of tempered, invariant eigendistributions of elliptic type. Hence, by Theorem~\ref{thm:elliptic_unique}, it is itself of elliptic type.

It remains to show that
\begin{align}\label{eq:elliptic:2}
\frac{\wt{\RD}_{G,H}(X_H, \cdot)}{\RD^G(\cdot)^{1/2}} =
\gamma_\psi(\Fh)
\sum_{
\substack{
\CO_{X_H'} \in \Fs_H^{-1}(\CO_{X_H}^\st) \\
\CO_{X_H''} \in \Gam(\Fh_{\Gstarreg}(\BR))
}
}
\frac{
\Delta(X_H'', \cdot)  \widehat{i}^H(X_H', X_H'')
}{
w(X_H'')  \RD^G(\cdot)^{1/2}
}
\end{align}
is also a tempered, invariant, \( \RI(\Fg) \)-finite distribution of elliptic type.

The \( G(\BR) \)-invariance of \eqref{eq:elliptic:2} is clear. To verify temperedness, observe that
\[
\widehat{i}^H(X_H', \cdot) = \RD^H(\cdot)^{1/2} \widehat{j}^H(X_H', \cdot)
\]
is globally bounded, as shown in \cite[(1.8.3)]{beuzart2020local}, and that \( \RD^G(\cdot)^{-1/2} \) is locally integrable and of moderate growth, by \cite[(1.7.1)]{beuzart2020local}. These two facts together imply that the expression in \eqref{eq:elliptic:2} defines a tempered distribution.

In order to show that \eqref{eq:elliptic:2} is \( \RI(\Fg) \)-finite and of elliptic type, we recall the framework of Harish-Chandra semi-simple descent, following \cite[§3.2]{beuzart2020local}.

\subsubsection{Harish-Chandra semi-simple descent}

\begin{defin}\label{defin:HC-descent:1}
An open subset \( \omega \subset \Fg(\BR) \) is called \emph{completely \( G(\BR) \)-invariant} if
\begin{enumerate}
    \item \( \omega \) is \( G(\BR) \)-invariant, and
    \item for any \( X \in \omega \), the semisimple part of \( X \) in its Jordan decomposition also lies in \( \omega \).
\end{enumerate}
\end{defin}

\begin{defin}\label{defin:HC-descent:2}
Let \( \omega \subset \Fg(\BR) \) be a completely \( G(\BR) \)-invariant open subset. 
\begin{itemize}
    \item Let \( \mathcal{C}^\infty(\omega)^G \) denote the space of smooth \( G(\BR) \)-invariant functions on \( \omega \);
    \item Let \( \mathcal{D}'(\omega)^G \) denote the space of \( G(\BR) \)-invariant distributions on \( \omega \).
\end{itemize}
\end{defin}

\begin{defin}\label{defin:HC-descent:3}
Let \( X \in \Fg(\BR) \) be a semisimple element.  
An open subset \( \omega_X \subset \Fg_X(\BR) \) is called \emph{\( G \)-good} if it is completely \( G_X(\BR) \)-invariant and the map
\begin{align}\label{eq:HC-descent:2}
\omega_X \times^{G_X(\BR)} G(\BR) &\longrightarrow \Fg(\BR), \\
(Y, g) &\longmapsto g^{-1} Y g, \nonumber
\end{align}
induces an analytic isomorphism between  
\( \omega_X \times^{G_X(\BR)} G(\BR) \) and \( \omega_X^G \),  
where \( \omega_X^G \) denotes the image of the map \eqref{eq:HC-descent:2}. Here, \( \omega_X \times^{G_X(\BR)} G(\BR) \) is the quotient of  
\( \omega_X \times G(\BR) \) by the \( G_X(\BR) \)-action
\[
g_X \cdot (Y, g) = \big( g_X Y g_X^{-1}, \, g_X g \big),
\quad 
g_X \in G_X(\BR),\; (Y, g) \in \omega_X \times G(\BR).
\]
\end{defin}
From \cite[§3.2]{beuzart2020local}, an open subset 
\( \omega_X \subset \Fg_X(\BR) \) is \( G \)-good if and only if the following conditions are satisfied:
\begin{itemize}
    \item \( \omega_X \) is completely \( G_X(\BR) \)-invariant;
    \item For any \( Y \in \omega_X \), 
    \[
        \eta^G_X(Y) := \big|\det\big(\ad(Y)\big)_{\Fg / \Fg_X}\big| \neq 0;
    \]
    \item For any \( g \in G(\BR) \), the intersection 
    \( g^{-1}\omega_X g \cap \omega_X \) is nonempty if and only if 
    \( g \in G_X(\BR) \).
\end{itemize}
Let \( \omega_X \subset \Fg_X(\BR) \) be a \( G \)-good open neighborhood of \( X \), and set \( \omega = \omega_X^G \).  
Then \( \omega \) is completely \( G(\BR) \)-invariant, since \( \omega_X \) is completely \( G_X(\BR) \)-invariant.  
We have the following integration formula:
\[
\int_\omega f(Y)\, dY \;=\;
\int_{G_X(\BR) \backslash G(\BR)}
\int_{\omega_X} 
f\big(g^{-1} Y g\big)\,
\eta_X^G(Y)\, dY\, dg,
\quad f \in L^1(\omega).
\]
For every function \( f \) defined on \( \omega \), let \( f_{X,\omega_X} \) be the function on \( \omega_X \) defined by
\[
f_{X,\omega_X}(Y) := \eta_X^G(Y)^{1/2}\, f(Y).
\]
Then the map \( f \mapsto f_{X,\omega_X} \) induces the following topological isomorphisms:
\[
\mathcal{C}^\infty(\omega)^G 
\simeq \mathcal{C}^\infty(\omega_X)^{G_X},
\quad
\mathcal{C}^\infty(\omega_\reg)^G 
\simeq \mathcal{C}^\infty(\omega_{X,\reg})^{G_X},
\]
where \( \omega_\reg = \omega \cap \Fg_\reg(\BR) \) and 
\( \omega_{X,\reg} = \omega_X \cap \Fg_\reg(\BR) \).
Similarly, we have an isomorphism of spaces of invariant distributions:
\begin{align*}
\mathcal{D}'(\omega)^G &\simeq \mathcal{D}'(\omega_X)^{G_X}, \\
T &\longmapsto T_{X,\omega_X},
\end{align*}
where for \( T \in \mathcal{D}'(\omega)^G \), the distribution \( T_{X,\omega_X} \) is the unique \( G_X(\BR) \)-invariant distribution on \( \omega_X \) satisfying
\[
\langle T, f \rangle 
=
\int_{G_X(\BR)\backslash G(\BR)}
\langle 
T_{X,\omega_X},\,
({}^g f)_{X,\omega_X}
\rangle \, dg,
\quad f \in \mathcal{C}^\infty_c(\omega).
\]
Moreover, if \( f \) is a locally integrable \( G(\BR) \)-invariant function on \( \omega \), then \( f_{X,\omega_X} \) is also locally integrable and
\[
(T_f)_{X,\omega_X} = T_{\,f_{X,\omega_X}},
\]
where \( T_f \) and \( T_{\,f_{X,\omega_X}} \) denote the locally integrable distributions represented by \( f \) and \( f_{X,\omega_X} \), respectively.

For an endoscopic tuple 
\(\RE = (G, G^*, \varphi, H, s, \xi) \in \CE(\BR)\),  
note that \(\hat{G}\) and \(G\) share the same absolute Weyl group, and \(\hat{H} \subset \hat{G}\).  
By Chevalley restriction theorem, after fixing an identification of maximal tori 
\[
H \supset T_H \simeq T_G \subset G
\]
which is equivariant under the embedding of absolute Weyl groups  
\( W^H \hookrightarrow W^G \) as in Remark~\ref{rmk:admissible_embed_Weylequivariant},  
we obtain a chain of embeddings of \( \BC \)-algebras:
\[
\RI(\Fg) \;\simeq\; \BC[\Ft_G]^{W^G} 
\;\hookrightarrow\; \BC[\Ft_H]^{W^H} \;\simeq\; \RI(\Fh).
\]

Similarly, for any semisimple element \( X \in \Fg \),  
Chevalley restriction theorem provides a natural \( \BC \)-algebra embedding
\[
\RI(\Fg) \hookrightarrow \RI(\Fg_X).
\]

The following proposition is a consequence of \cite[Lemma~3.2.1]{beuzart2020local}.

\begin{pro}\label{pro:HC-descent_to_torus}
Let \( X \) be a semisimple element in \( \Fg(\BR) \), and let 
\( \omega_X \subset \Fg_X(\BR) \) be a \( G \)-good open subset.  
Set \( \omega = \omega_X^G \).  
Then for any \( T \in \mathcal{D}'(\omega)^G \) and any \( z \in \RI(\Fg) \), we have
\[
\big( \partial(z)\, T \big)_{X, \omega_X} 
= 
\big( \partial(z) \big)_{X, \omega_X} \, T_{X, \omega_X}.
\]
Moreover,
\[
\big( \partial(z) \big)_{X, \omega_X} = \partial(z_X),
\]
for all \( z \in \RI(\Fg) \),  
where \( z_X \) is the image of \( z \) under the natural injection
\[
\RI(\Fg) \hookrightarrow \RI(\Fg_X).
\]
\end{pro}

We are now ready to show that \eqref{eq:elliptic:2} is \( \RI(\Fg) \)-finite and of elliptic type.  
For any \( z \in \RI(\Fg) \), consider
\begin{equation}\label{eq:last_pf:2}
\partial(z) \bigg(
\gamma_\psi(\Fh)
\sum_{
\substack{
\CO_{X_H'} \in \Fs_H^{-1}(\CO_{X_H}^\st) \\
\CO_{X_H''} \in \Gam(\Fh_{\Gstarreg}(\BR))
}
}
\frac{
\Delta(X_H'', \cdot)\,
\widehat{i}^H(X_H', X_H'')
}{
w(X_H'')\, \RD^G(\cdot)^{1/2}
}
\bigg),
\end{equation}
where we recall that \( X_H' \) runs over the stable conjugacy class of \( X_H \) modulo \( H(\BR) \)-conjugation, and  
\( X_H'' \) runs over \( \Fh_{\Gstarreg}(\BR) \) modulo \( H(\BR) \)-conjugation.

By the definition of the transfer factor, for any \( X_G \in \Fg_\reg(\BR) \) such that  
\( \Delta(X_H'', X_G) \neq 0 \), there exists a diagram 
$
\FD(X_H'', X_G; \BR).
$
By Definition~\ref{defin:Gstarreg}, this implies that there is an isomorphism of tori (here we work with the Lie algebras) defined over \( \BR \) sending \( X_H'' \) to \( X_G \):
\begin{align}\label{eq:last_pf:1}
\CI^{X_G}_{X_H''} \colon 
\Ft_{X_H''} &\longrightarrow \Ft_{X_G}, \\
X_H'' &\longmapsto X_G. \nonumber
\end{align}

Let \( \omega_{X_G} \subset \Ft_{X_G}(\BR) \) be a \( G \)-good open subset.  
Under the isomorphism \eqref{eq:last_pf:1}, the set \( \omega_{X_G} \) is mapped to a  
\( H \)-good open subset \( \omega_{X_H''} \subset \Ft_{X_H''}(\BR) \).  
We then set
\[
\omega^G = \omega_{X_G}^G, 
\qquad
\omega^H = \omega_{X_H''}^H.
\]

We are going to study the descent of \eqref{eq:last_pf:2} to the $G$-good neighborhood $\ome_{X_G}\ni X_G$. By Proposition \ref{pro:HC-descent_to_torus},
\begin{align}\label{eq:last_pf:5}
&\Bigg(
    \partial(z)
\bigg(
\gam_\psi(\Fh)
\sum_{
\substack{
\CO_{X^\p_H}\in \Fs^{-1}_H(\CO^\st_{X_H}) 
\\ 
\CO_{X^{\p\p}_H}\in \Gam(\Fh_{\Gstarreg}(\BR))}
}
\frac{\Del(X^{\p\p}_H,\cdot)\wh{i}^H(X^\p_H,X^{\p\p}_H)}{
w(X^{\p\p}_H)
\RD^G(\cdot)^{1/2}
}
\bigg)
\Bigg)_{X_G,\ome_{X_G}}     \nonumber
\\ 
=& 
\partial(z_{X_G})
\bigg(
\gam_\psi(\Fh)
\sum_{
\substack{
\CO_{X^\p_H}\in \Fs^{-1}_H(\CO^\st_{X_H}) 
\\ 
\CO_{X^{\p\p}_H}\in \Gam(\Fh_{\Gstarreg}(\BR))}
}
\frac{\Del(X^{\p\p}_H,\cdot)\wh{i}^H(X^\p_H,X^{\p\p}_H)}{
w(X^{\p\p}_H)\RD^G(\cdot)^{1/2}}
\bigg)_{X_G,\ome_{X_G}},
\end{align}
where $z_{X_G}$ is the image of $z$ under the injection $\RI(\Fg)\hookrightarrow \RI(\Fg_{X_G}=\Ft_{X_G})$. 

By the definition of descent for functions, the above equation is equal to the following 
\begin{align*} 
=&
\partial(z_{X_G})
\bigg(
\gam_\psi(\Fh)
\sum_{
\substack{
\CO_{X^\p_H}\in \Fs^{-1}_H(\CO^\st_{X_H}) 
\\ 
\CO_{X^{\p\p}_H}\in \Gam(\Fh_{\Gstarreg}(\BR))}
}
\frac{\Del(X^{\p\p}_H,\cdot)\wh{i}^H(X^\p_H,X^{\p\p}_H)}{
w(X^{\p\p}_H)}
\bigg)\quad \text{restricted to $\ome_{X_G}\subset \Ft_{X_G}$}
\\ 
=&
\gam_\psi(\Fh)
\sum_{
\substack{
\CO_{X^\p_H}\in \Fs^{-1}_H(\CO^\st_{X_H}) 
\\ 
\CO_{X^{\p\p}_H}\in \Gam(\Fh_{\Gstarreg}(\BR))}
}
\partial(z_{X_G})
\bigg(
\frac{\Del(X^{\p\p}_H,\cdot)\wh{i}^H(X^\p_H,X^{\p\p}_H)}{
w(X^{\p\p}_H)}
\bigg)\quad \text{restricted to $\ome_{X_G}\subset \Ft_{X_G}$}.
\end{align*}
For fixed $X^{\p\p}_H\in \Fh_{\Gstarreg}(\BR)$ with attached diagram $\FD(X^{\p\p}_H,X_G;\BR)$, applying the isomorphism $\CI^{X_G}_{X^{\p\p}_H}:\Ft_{X^{\p\p}_H}\simeq \Ft_{X_G}$ which sends $X^{\p\p}_H$ to $X_G$, we get the above equation is equal to the following
\begin{align}\label{eq:last_pf:4}
=&
\gam_\psi(\Fh)
\sum_{
\substack{
\CO_{X^\p_H}\in \Fs^{-1}_H(\CO^\st_{X_H}) 
\\ 
\CO_{X^{\p\p}_H}\in \Gam(\Fh_{\Gstarreg}(\BR))}
}
\partial(z_{X^{\p\p}_H})
\bigg(
\frac{\Del\big(X^{\p\p}_H,\CI^{X_G}_{X^{\p\p}_H}(X^{\p\p}_H)\big)\wh{i}^H(X^\p_H,X^{\p\p}_H)}{
w(X^{\p\p}_H)}
\bigg)\quad \text{restricted to $\ome_{X^{\p\p}_H}\subset \Ft_{X^{\p\p}_H}$}            \nonumber
\\ 
=&
\gam_\psi(\Fh)
\sum_{
\substack{
\CO_{X^\p_H}\in \Fs^{-1}_H(\CO^\st_{X_H}) 
\\ 
\CO_{X^{\p\p}_H}\in \Gam(\Fh_{\Gstarreg}(\BR))}
}
\frac{\Del\big(X^{\p\p}_H,\CI^{X_G}_{X^{\p\p}_H}(X^{\p\p}_H)\big)
\partial(z_{X^{\p\p}_H})
\big(
\wh{i}^H(X^\p_H,X^{\p\p}_H)
\big)
}{
w(X^{\p\p}_H)}
\quad \text{restricted to $\ome_{X^{\p\p}_H}\subset \Ft_{X^{\p\p}_H}$}.
\end{align}
The last identity follows from the fact that both $w(X^{\p\p}_H)$ and 
$\Del\big(X^{\p\p}_H,\CI^{X_G}_{X^{\p\p}_H}(X^{\p\p}_H)\big) = \Del(X^{\p\p}_H,X_G)$ remain constant in a neighborhood of $X^{\p\p}_H\in \ome_{X^{\p\p}_H}$. 

For each individual term 
$$
\partial(z_{X^{\p\p}_H})
\big(
\wh{i}^H(X^\p_H,X^{\p\p}_H)
\big)\quad \text{restricted to $\ome_{X^{\p\p}_H}\subset \Ft_{X^{\p\p}_H}$,}
$$
reversing the descent to the $H$-good neighborhood $\ome_{X^{\p\p}_H}\ni X^{\p\p}_H$, the above equation becomes 
$$
\partial(z_{X^{\p\p}_H})
\big(
\wh{j}^H(X^{\p}_H, X^{\p\p}_H)
\big)_{X^{\p\p}_H, \ome_{X^{\p\p}_H}},
$$
which, by Proposition \ref{pro:HC-descent_to_torus}, is equal to 
\begin{equation}\label{eq:last_pf:3}
\big(
\partial(z_H)
\wh{j}^H(X^\p_H,X^{\p\p}_H)
\big)_{X^{\p\p}_H,\ome_{X^{\p\p}_H}},
\end{equation}
where $z_H$ is the image of $z$ under the injection $\RI(\Fg)\hookrightarrow \RI(\Fh)$. Here we use the fact that the isomorphism $\CI^{X_G}_{X^{\p\p}_H}$ is equivariant with respect to the action of Weyl groups $W^H\hookrightarrow W^G$ and the Chevalley restriction theorem, which ensures the commutativity of the following diagram. In particular, the commutativity of the diagram below implies that $z_{X^{\p\p}_H}$ is the image of $z_H$ under the natural injection $\RI(\Fh)\hookrightarrow \RI(\Ft_{X^{\p\p}_H})$.
$$
\xymatrix{
\RI(\Fg) \ar[r] \ar[d] & \RI(\Fh) \ar[d] \\
\RI(\Ft_{X_G}) \ar[r]_{\simeq}^{\CI^{X_G}_{X^{\p\p}_H}} & \RI(\Ft_{X^{\p\p}_H})
}
$$
By Lemma \ref{lem:hatJeigen}, there exists a character $\chi^H_{[X^{\p}_H]}:\RI(\Fh)\to \BC$ such that \eqref{eq:last_pf:3} is equal to 
$$
\chi^H_{[X^\p_H]}(z_H)
\big(
\wh{j}^H(X^\p_H,X^{\p\p}_H)
\big)_{X^{\p\p}_H,\ome_{X^{\p\p}_H}}.
$$
Since $X^\p_H$ is $G^*$-elliptic, there exists a diagram $\FD(X^\p_H,X^\p_G;\BR)$ with $X^\p_G$ elliptic in $\Fg_\reg(\BR)$, and there is an isomorphism $\CI^{X^\p_G}_{X^\p_H}:\Ft_{X^\p_H}\simeq \Ft_{X^\p_G}$ over $\BR$ sending $X^\p_H$ to $X^\p_G$. By the commutativity of the diagram 
$$
\xymatrix{
\RI(\Fg) \ar[r] \ar[d] & \RI(\Fh) \ar[d] \\
\RI(\Ft_{X^\p_G}) \ar[r]_{\simeq}^{\CI^{X^\p_G}_{X^{\p}_H}} & \RI(\Ft_{X^{\p}_H})
},
$$
after pulling back the character $\chi^H_{[X^\p_H]}:\RI(\Fh)\to \BC$ along the injection $\RI(\Fg)\to \RI(\Fh)$, we get the homomorphism $\chi^G_{[X^\p_G]}$ (which follows from the commutativity of the above diagram and Hilbert's Nullstellensatz). Hence \eqref{eq:last_pf:3} becomes 
$$
\chi^G_{[X^\p_G]}(z)
\big(
\wh{j}^H(X^\p_H,X^{\p\p}_H)
\big)_{X^{\p\p}_H,\ome_{X^{\p\p}_H}}.
$$
Inserting the above equation back to \eqref{eq:last_pf:4}, we deduce that \eqref{eq:last_pf:4} becomes
\begin{align*}
\gam_\psi(\Fh)
\sum_{
\substack{
\CO_{X^\p_H}\in \Fs^{-1}_H(\CO^\st_{X_H}) 
\\ 
\CO_{X^{\p\p}_H}\in \Gam(\Fh_{\Gstarreg}(\BR))
}
}
\chi^G_{[X^\p_G]}(z)
\frac{\Del\big(X^{\p\p}_H,\CI^{X_G}_{X^{\p\p}_H}(X^{\p\p}_H)\big)
\big(
\wh{i}^H(X^{\p}_H,X^{\p\p}_H)
\big)}{w(X^{\p\p}_H)}\quad
\text{restricted to $\ome_{X^{\p\p}_H}\subset \Ft_{X^{\p\p}_H}$}.
\end{align*}
Following the same argument as before, we deduce that \eqref{eq:last_pf:2} is equal to 
\begin{align*}
\gam_\psi(\Fh)
\sum_{
\substack{
\CO_{X^\p_H}\in \Fs^{-1}_H(\CO^\st_{X_H}) 
\\ 
\CO_{X^{\p\p}_H}\in \Gam(\Fh_{\Gstarreg}(\BR))
}
}
\chi^G_{[X^\p_G]}(z)
\frac{\Del\big(X^{\p\p}_H,\cdot\big)
\big(
\wh{i}^H(X^{\p}_H,X^{\p\p}_H)
\big)}{w(X^{\p\p}_H)\RD^G(\cdot)^{1/2}}.
\end{align*}
Compared with Definition \ref{defin:elliptic_type_distribution}, we deduce that $\frac{\wt{\RD}_{G,H}(X_H,\cdot)}{\RD^G(\cdot)^{1/2}}$ is indeed $\RI(\Fg)$-finite and of elliptic type. It follows that we have finished the proof of the proposition.
\end{proof}


\end{document}